\newtheorem{theorem}{Theorem}[section]
\newtheorem{corollary}[theorem]{Corollary}
\newtheorem{proposition}[theorem]{Proposition}
\newtheorem{definition}[theorem]{Definition}
\newtheorem*{theorem*}{Theorem}
\newtheorem*{lemma*}{Lemma}
\newtheorem*{remark*}{Remark}
\newtheorem*{definition*}{Definition}
\newtheorem*{proposition*}{Proposition}
\newtheorem*{corollary*}{Corollary}
\numberwithin{equation}{section}
\newcommand{\real}{\mathbb{R}}
\def\qed{\,\unskip\kern 6pt \penalty 500
\raise -2pt\hbox{\vrule \vbox to8pt{\hrule width 6pt
\vfill\hrule}\vrule}\par}
\definecolor{darkblue}{rgb}{0.05, .05, .65}
\definecolor{darkgreen}{rgb}{0.1, .65, .1}
\definecolor{darkred}{rgb}{0.8,0,0}
\newcommand{\beqn}{\begin{equation}}
\newcommand{\eeqn}{\end{equation}}
\newcommand{\bear}{\begin{eqnarray}}
\newcommand{\eear}{\end{eqnarray}}
\newcommand{\bean}{\begin{eqnarray*}}
\newcommand{\eean}{\end{eqnarray*}}
\begin{document}

\title{\huge \bf Some qualitative properties of solutions to a reaction-diffusion equation with weighted strong reaction}

\author{
\Large Razvan Gabriel Iagar\,\footnote{Departamento de Matem\'{a}tica
Aplicada, Ciencia e Ingenieria de los Materiales y Tecnologia
Electr\'onica, Universidad Rey Juan Carlos, M\'{o}stoles,
28933, Madrid, Spain, \textit{e-mail:} razvan.iagar@urjc.es},\\
[4pt] \Large Ana I. Mu\~{n}oz\,\footnote{Departamento de Matem\'{a}tica
Aplicada, Ciencia e Ingenieria de los Materiales y Tecnologia
Electr\'onica, Universidad Rey Juan Carlos, M\'{o}stoles,
28933, Madrid, Spain, \textit{e-mail:} anaisabel.munoz@urjc.es},
\\[4pt] \Large Ariel S\'{a}nchez,\footnote{Departamento de Matem\'{a}tica
Aplicada, Ciencia e Ingenieria de los Materiales y Tecnologia
Electr\'onica, Universidad Rey Juan Carlos, M\'{o}stoles,
28933, Madrid, Spain, \textit{e-mail:} ariel.sanchez@urjc.es}\\
[4pt] }
\date{}
\maketitle

\begin{abstract}
We study the existence and qualitative properties of solutions to the Cauchy problem associated to the quasilinear reaction-diffusion equation
$$
\partial_tu=\Delta u^m+(1+|x|)^{\sigma}u^p,
$$
posed for $(x,t)\in\real^N\times(0,\infty)$, where $m>1$, $p\in(0,1)$ and $\sigma>0$. Initial data are taken to be bounded, non-negative and compactly supported. In the range when $m+p\geq2$, we prove \emph{local existence of solutions} together with a \emph{finite speed of propagation} of their supports for compactly supported initial conditions. We also show in this case that, for a given compactly supported initial condition, there exist \emph{infinitely many solutions} to the Cauchy problem, by prescribing the evolution of their interface. In the complementary range $m+p<2$, we obtain new \emph{Aronson-B\'enilan estimates} satisfied by solutions to the Cauchy problem, which are of independent interest as a priori bounds for the solutions. We apply these estimates to establish \emph{infinite speed of propagation} of the supports of solutions if $m+p<2$, that is, $u(x,t)>0$ for any $x\in\real^N$, $t>0$, even in the case when the initial condition $u_0$ was compactly supported.
\end{abstract}

\

\noindent {\bf MSC Subject Classification 2020:} 35B44, 35B45,
35K57, 35K59, 35R25, 35R37.

\smallskip

\noindent {\bf Keywords and phrases:} reaction-diffusion equations, weighted reaction, Aronson-B\'enilan estimates, finite speed of propagation, strong reaction, non-uniqueness.

\section{Introduction}

This paper deals with the qualitative theory of the weak solutions to the Cauchy problem for the following reaction-diffusion equation
\begin{equation}\label{eq1}
\partial_tu=\Delta u^m+(1+|x|)^{\sigma}u^p, \qquad  (x,t)\in \real^N\times(0,\infty), \ N\geq1,
\end{equation}
supplemented with the initial condition
\begin{equation}\label{eq2}
u(x,0)=u_0(x), \quad x\in\real^N.
\end{equation}
The exponents in \eqref{eq1} are considered throughout the paper to belong to the following range
\begin{equation}\label{exp}
m>1, \qquad 0<p<1, \qquad 0<\sigma<\infty,
\end{equation}
although we also give alternative proofs or even slight improvements of known results with $\sigma=0$. We will work in general with bounded, compactly supported, non-negative and non-trivial initial conditions, more precisely
\begin{equation}\label{icond}
u_0\in L^{\infty}(\real^N), \qquad {\rm supp}\,u_0\subseteq B(0,R), \qquad u_0(x)\geq0, \ {\rm for \ any} \ x\in\real^N, \ u_0\not\equiv0,
\end{equation}
and further regularity assumptions (such as continuity) will be specified at the points where they are needed. In nonlinear diffusion problems it is standard to consider data belonging to the space $L^1_{\rm loc}(\real^N)$ and we are sure that some of our results can be extended to this weaker space than $L^{\infty}(\real^N)$. However, for simplicity and also as in the range of exponents \eqref{exp} finite time blow-up of bounded solutions is expected (as established, for example, for self-similar solutions in recent works such as \cite{IS20a, IS20b, IMS22}), we decided to avoid possible pointwise singularities at finite points and thus require \eqref{icond}.

A deep study of Eq. \eqref{eq1} in the range $p>1$ has been performed by Andreucci and DiBenedetto in \cite{AdB91} for weights with any $\sigma\in\real$, that is, both positive and negative. Properties such as local existence of weak solutions under optimal growth conditions on the initial data, estimates, regularity of them and Harnack-type inequalities are obtained. The authors also specify in \cite{AdB91} that some of their results can be extended to the range $0<p<1$ but only when $\sigma<0$ and leave open the case $\sigma>0$ with $0<p<1$. We find this latter case very interesting to study due to the merging of the non-Lipschitz reaction term (which does not produce finite time blow-up by itself) with the unbounded weight.

The non-weighted equation
\begin{equation}\label{eq4}
u_t=\Delta u^m+u^p,
\end{equation}
corresponding to $\sigma=0$ in Eq. \eqref{eq1} is nowadays quite well understood (at least in dimension $N=1$) after a series of works by de Pablo and V\'azquez \cite{dPV90, dPV91, dPV92, dP94} and the outcome of them is very interesting, despite the fact that we are dealing with an ill-posed problem. It is shown that solutions \emph{exist always} and they are global in time if $u_0$ satisfies a growth condition similar to the one required for existence in the porous medium equation, namely $u_0(x)=o(|x|^{2/(m-1)})$ \cite{dPV91} and that there is always a minimal and a maximal solution, both constructed via approximations. The most interesting and surprising features of this problem with $\sigma=0$ are related with the uniqueness. This property depends strongly on two aspects: the sign of the critical exponent $m+p-2$ and the positivity or not of the initial condition $u_0$. More precisely

$\bullet$ if $m+p-2\geq0$, uniqueness of solutions to the Cauchy problem \eqref{eq4}-\eqref{eq2} holds true if and only if ${\rm supp}\,u_0(x)=\real^N$ \cite{dPV90}. If $u_0$ is compactly supported, then its support has the property of finite propagation and interfaces appear. Moreover, the extent of the non-uniqueness property in this case is addressed in \cite{dPV92} where it is shown that giving a rather general compactly supported initial condition $u_0$ and a function of time $\xi(t)$ advancing faster than the interface of the (unique) minimal solution constructed in \cite{dPV91}, there is always a solution to the Cauchy problem with data $u_0$ and interface at time $t>0$ given by $\xi(t)$. This is a very strong and sharp non-uniqueness property, giving rise in fact to an infinity of solutions.

$\bullet$ if $m+p-2<0$ things change radically due to the infinite speed of propagation: even if $u_0$ is compactly supported it is shown in \cite{dPV90} that $u(x,t)>0$ for any $x\in\real^N$ and $t>0$, thus we have a property known as \emph{quasi-uniqueness}: there exists a unique solution to the Cauchy problem for any initial condition $u_0$ except for the trivial one $u_0\equiv0$, where two different solutions are constructed.

Considering weighted reaction terms came as a natural extension of the already well developed knowledge on the ``classical" reaction-diffusion equations with reaction of the form $u^p$ or more general functions resembling it (see \cite{QS,S4} as important monographs on this subject). Many results were achieved for the \emph{semilinear case $m=1$} and unbounded weights of the form $|x|^{\sigma}u^p$ (and sometimes even more general weights $V(x)$ instead of pure powers), always with $p>1$. Fujita-type exponents and conditions on the data for finite-time blow-up to occur were studied in celebrated papers by Baras and Kersner, Bandle and Levine, Pinsky et. al., see for example \cite{BK87, BL89, Pi97, Pi98}. More recently, still with $m=1$, an interesting question was addressed: considering the equation
\begin{equation}\label{eq3.bis}
u_t=\Delta u+|x|^{\sigma}u^p,
\end{equation}
it is natural to ask ourselves whether $x=0$ (or more generally, any zero of a power-like weight $V(x)$) can be a blow-up point. Examples of both possible situations (when $x=0$ is a blow-up point and when it is not) were constructed (mostly for Cauchy-Dirichlet problems posed in bounded domains) in the series of recent papers by Guo and collaborators \cite{GLS10, GLS13, GS11, GS18}. Finer analysis on how blow-up occurs, with rates and local asymptotic behavior in self-similar forms near the blow-up time and points was performed by Filippas and Tertikas \cite{FT00} and in the very recent work by Mukai and Seki \cite{MS20} for different ranges of the exponent $p>1$.

Due to their further complexity and the fact that even for the non-weighted case $\sigma=0$ there are some difficult open problems (see for example \cite[Chapter 4]{S4}), equations such as \eqref{eq1} or its close relative
\begin{equation}\label{eq3}
u_t=\Delta u^m+|x|^{\sigma}u^p,
\end{equation}
with $m>1$ have been less considered in literature. Apart from the quoted paper \cite{AdB91}, the Fujita-type exponent and rather sharp conditions on the initial data for the finite time blow-up to hold true have been obtained by Qi \cite{Qi98} and then Suzuki \cite{Su02} for the case $p>m>1$ (including also a part of the fast diffusion range $m<1$ in \cite{Qi98}). We recommend Suzuki's paper to the reader as a well-written basic work on the qualitative theory for these equations, while blow-up rates as $t\to T$ also for $p>m$ are proved in \cite{AT05}.

In recent years, the authors of the present work started a larger project of understanding the patterns (in self-similar form) that solutions may take either in the case of global solutions, or close to the blow-up time if this occurs, for Eq. \eqref{eq3}. This is an important part of the study, since it is well-known that such patterns are a prototype of the general behavior of the equation, in form of asymptotic profiles as $t\to\infty$ or $t\to T$ and also bring a deeper understanding on the blow-up sets and rates. A series of papers \cite{IS19a, IS21, IS22, ILS23} address the question of the blow-up profiles to Eq. \eqref{eq3} for $m>1$ and $1\leq p\leq m$, where interesting and rather unexpected behaviors were established. In all these cases, solutions blow up in finite time when $\sigma>0$, but their specific blow-up behavior it is shown to depend strongly on the magnitude of $\sigma$. Going back to the case of interest for us, $p\in(0,1)$, we have proved that blow-up profiles exist if the following condition is fulfilled:
\begin{equation}\label{const}
L:=\sigma(m-1)+2(p-1)>0.
\end{equation}
We classified such blow-up profiles in \cite{IMS22} in general dimensions $N\geq2$, following previous results restricted to dimension $N=1$ in \cite{IS20b, IS20c}, obtaining again that the sign of $m+p-2$ is fundamental for their existence and behavior. More precisely,

$\bullet$ when $m+p-2>0$ all the blow-up self-similar profiles present an interface (that is, they are compactly supported) and there are two different types of possible interface behaviors. This is both a manifestation of non-uniqueness (expected for compactly supported data) and a suggestion of possible non-existence of solutions when $u_0(x)>0$, as there is no pattern they can approach when $t\to T$.

$\bullet$ when $m+p=2$ self-similar blow-up patterns present a rather similar panorama, but with a single type of interface behavior.

$\bullet$ when $m+p<2$ a rather striking non-existence of any kind of blow-up profiles (either with interfaces or not) occurs \cite{IS20b}. Such an outcome gives the intuition of an infinite speed of propagation and a complete non-existence of non-trivial solutions.

In this paper we thus begin the qualitative study of the Cauchy problem for Eq. \eqref{eq1} posed in $\real^N$. We thus address the quite interesting (in view of precedents such as \cite{dPV90, dPV92}) question of speed of propagation and non-uniqueness for compactly supported data, deriving in the process some new Aronson-B\'enilan estimates for the solutions to Eq. \eqref{eq1} when $m+p<2$. We stress here that uniform lower bounds on the weight are essential in the forthcoming proofs, thus technical complications introduced by the weight in a neighborhood of $x=0$ (where it is no longer uniformly positive) appear when trying to adapt the same proofs to the close relative Eq. \eqref{eq3}.

But let us present below in more detail our main results.

\medskip

\noindent \textbf{Main results.} In order to state our results concerning the qualitative theory of solutions to Eq. \eqref{eq1}, we first have to introduce the notion of \emph{weak solution} that will be used throughout the paper. Let us denote by $u(t)$ the mapping $x\mapsto u(x,t)$ for $t>0$ fixed. We will slightly modify the functional framework from Andreucci and DiBenedetto \cite{AdB91} by passing in our weak formulation the full Laplacian to the test function.
\begin{definition}\label{def.sol}
A non-negative function $u:\real^N\times(0,T)\mapsto[0,\infty)$ is said to be a \emph{weak solution} to the Cauchy problem \eqref{eq1}-\eqref{eq2} with initial condition $u_0$ as in \eqref{icond} if the following assumptions are satisfied by $u$:

(a) \emph{Regularity assumption}: we have
\begin{equation}\label{reg.sol}
u\in C(0,T;L^1_{\rm loc}(\real^N))\cap L^{\infty}_{\rm loc}(\real^N\times(0,T)).
\end{equation}

(b) \emph{Weak formulation of \eqref{eq1}}: for any test function $\eta\in C_0^{\infty}(\real^N\times(0,T))$ and for any $t\in(0,T)$ we have
\begin{equation}\label{weak.sol}
\begin{split}
\int_{\real^N}u(x,t)\eta(x,t)\,dx&+\int_0^t\int_{\real^N}\left(-u(x,\tau)\eta_t(x,\tau)-u^m(x,\tau)\Delta\eta(x,\tau)\right)dx\,d\tau\\&=
\int_0^t\int_{\real^N}(1+|x|)^{\sigma}u^p(x,\tau)\eta(x,\tau)dx\,d\tau.
\end{split}
\end{equation}

(c) \emph{Taking the initial condition}: this is done in $L^1$ sense, more precisely
\begin{equation}\label{limit.sol}
\lim\limits_{t\to0}u(t)=u_0, \qquad {\rm with \ convergence \ in} \ L^1_{\rm loc}(\real^N)
\end{equation}
\end{definition}
This change relaxes the functional assumptions of regularity for a solution, making it easier to obtain weak solutions by limiting processes. We will also need throughout the paper the notions of (weak) sub- and supersolution to Eq. \eqref{eq1}. We say that $u$ is a \emph{weak subsolution} (respectively \emph{weak supersolution}) to Eq. \eqref{eq1} if condition (a) is fulfilled and condition (b) is modified in the sense that the equal sign is replaced by $\leq$ (respectively $\geq$) for any test functions $\eta\in C_0^{\infty}(\real^N\times(0,T))$ such that $\eta\geq0$ and for any $t\in(0,T)$. Moreover, the notions of weak solution, subsolution, supersolution to Eq. \eqref{eq1} (without the initial condition) can be defined in an obvious way on time intervals $[t_1,t_2]\subset(0,T)$ instead of $(0,T)$ by just removing assumption (c).

Once defined the notion of weak solution, we are in a position to give below the main theorems of this work. As explained above, it is expected from the results in papers such as \cite{dPV90, dPV91, dPV92} for equations with non-weighted reaction that in our range of exponents ill-posedness of the Cauchy problem will still hold true and both existence and uniqueness of solutions are an issue. As we shall see below, if we consider initial conditions $u_0$ as in \eqref{icond}, these basic properties of existence and uniqueness of weak solutions are strongly related to the finite or infinite speed of propagation of their edge of the support. Throughout the paper, we will denote by $C_0(\real^N)$ the space of continuous, compactly supported functions on $\real^N$.

\medskip

\noindent \textbf{The range} $\mathbf{m+p\geq2}$. Let us first focus on the range of exponents for which $m+p\geq2$ and $\sigma>0$. In this case, we shall prove that, at least for some interval of time $t\in(0,T)$, there exist infinitely many weak solutions to Eq. \eqref{eq1}. We begin with the existence of at least one local (in time) weak solution, as stated in the following:
\begin{theorem}[Local existence of compactly supported solutions]\label{th.exist}
In our framework and notation, assume that $m+p\geq2$ and let $u_0$ be an initial condition as in \eqref{icond} satisfying moreover that $u_0\in C_0(\real^N)$. Then there exists $T>0$ and there exists at least a weak solution $u$ to the Cauchy problem \eqref{eq1}-\eqref{eq2} for $t\in(0,T)$ which remains continuous and compactly supported: $u(t)\in C_0(\real^N)$ for any $t\in(0,T)$.
\end{theorem}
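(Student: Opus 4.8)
The plan is to construct a local weak solution by a two-stage approximation: first regularize the singular reaction term to obtain a genuinely parabolic (nondegenerate after a second regularization of the diffusion) problem with a smooth, globally Lipschitz nonlinearity, and then pass to the limit. Concretely, I would fix a small parameter $\e>0$ and replace the non-Lipschitz reaction $(1+|x|)^\sigma u^p$ by a truncated, Lipschitz approximation $f_\e(x,u)=(1+|x|)^\sigma (u+\e)^p$ (or a cut-off of the weight at large $|x|$ to keep coefficients bounded on the relevant region), and simultaneously regularize the degenerate diffusion as $\Delta(u+\e)^m$. For such a regularized problem, with smooth initial data $u_{0,\e}\to u_0$ obtained by mollification, classical quasilinear parabolic theory yields a unique smooth, nonnegative solution $u_\e$ on a short time interval, and the maximum principle applies.

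The crux of obtaining a \emph{compactly supported} limit is the construction of a suitable supersolution that confines the support and simultaneously bounds the solution in $L^\infty$ on a time interval $(0,T)$ independent of $\e$. Here the hypothesis $m+p\geq2$ is essential: I would look for an explicit compactly supported supersolution of the separated or self-similar form, for instance a Barenblatt-type or parabola-type profile $U(x,t)=\bigl(A(t)-B(t)|x|^2\bigr)_+^{1/(m-1)}$, whose free boundary moves with \emph{finite speed}. The point of the condition $m+p\geq2$ is exactly that it makes the reaction term subcritical relative to the diffusion near the interface, so that the reaction does not force instantaneous spreading of the support; this is precisely the dichotomy flagged in the introduction (finite versus infinite propagation depending on the sign of $m+p-2$). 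One would verify, by a direct computation feeding $U$ into Eq.~\eqref{eq1}, that for $A,B$ chosen growing appropriately in $t$ the function $U$ is a classical (hence weak) supersolution on a short time interval, dominates $u_{0,\e}$ initially, and keeps its support inside a ball $B(0,\rho(t))$ with $\rho(t)<\infty$. The comparison principle for the regularized problems then gives $0\le u_\e\le U$, yielding both the uniform $L^\infty$ bound and the uniform confinement of supports.

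With these uniform bounds in hand, the passage to the limit $\e\to0$ proceeds along standard lines for degenerate diffusion equations. I would extract uniform estimates sufficient for compactness: the $L^\infty$ bound gives weak-$*$ convergence, and local energy (and, via the uniform support confinement, spatially localized) estimates on $\nabla u_\e^m$ together with an Aubin--Lions-type argument yield strong $L^1_{\mathrm{loc}}$ convergence of $u_\e$, which is what is needed to pass to the limit in the nonlinear terms $u_\e^m$ and $(1+|x|)^\sigma(u_\e+\e)^p$ inside the weak formulation \eqref{weak.sol}. The uniform confinement of supports passes to the limit, so the limit $u$ inherits $u(t)\in C_0(\real^N)$, and continuity of $u$ follows from the known regularity theory for porous-medium-type equations (bounded weak solutions are continuous). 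The attainment of the initial datum in $L^1_{\mathrm{loc}}$ as in \eqref{limit.sol} comes from the equicontinuity in time provided by the same estimates.

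The main obstacle I anticipate is the construction of the confining supersolution and the verification that the condition $m+p\geq2$ is exactly what makes it work: one must check that the reaction contribution $(1+|x|)^\sigma U^p$ can be absorbed into the time-derivative and diffusion terms right up to the moving interface, where $U\to0$ and both $U^p$ and $\Delta U^m$ degenerate at competing rates. Balancing these rates near the free boundary is where the inequality $m+p\ge2$ enters decisively, and getting the supersolution inequality to hold uniformly (rather than only away from the interface) is the delicate point; an alternative, if a single explicit profile proves awkward, is to build the barrier by comparison with a travelling-wave or a known supersolution of the purely diffusive porous medium equation augmented by a controlled forcing term. The remaining steps—regularization, comparison, and compactness—are routine given the uniform bounds.
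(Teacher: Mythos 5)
Your proposal has a fatal flaw in the regularization step, and it is not a repairable technicality but a structural one. Because your approximate reaction $(1+|x|)^\sigma(u+\e)^p$ does \emph{not} vanish at $u=0$, no compactly supported function can be a supersolution of the regularized problem: outside the support of your barrier $U$ the supersolution inequality would read $0\ge \Delta\e^m+(1+|x|)^\sigma\e^p>0$. Hence the claimed comparison $0\le u_\e\le U$ is false at every point outside ${\rm supp}\,U$, and the uniform confinement of supports never gets off the ground. Worse, since the weight is bounded below by $1$, the spatially constant function $y_\e(t)=\bigl[\e^{1-p}+(1-p)t\bigr]^{1/(1-p)}-\e$ is a subsolution of your regularized problem with $y_\e(0)=0\le u_{0,\e}$ (its time derivative equals $(y_\e+\e)^p$ and its Laplacian vanishes), so the comparison principle for the regularized problem forces $u_\e(x,t)\ge y_\e(t)$ for \emph{all} $x$; letting $\e\to0$, any limit $u$ satisfies $u(x,t)\ge\bigl((1-p)t\bigr)^{1/(1-p)}>0$ everywhere. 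In other words, approximating the reaction \emph{from above} is precisely the de Pablo--V\'azquez construction of the \emph{maximal}, everywhere-positive solution; since the limit equation has no comparison principle (non-uniqueness is the whole point here), which solution the scheme selects is decided entirely by the approximation, and yours can never produce the compactly supported solution the theorem asserts. (A further problem with keeping the untruncated weight: the far field is forced to grow like $(1+|x|)^{\sigma/(1-p)}t^{1/(1-p)}$, which exceeds the admissible porous-medium growth $|x|^{2/(m-1)}$ whenever $\sigma>2(1-p)/(m-1)$, so the regularized problems may not even be solvable.)

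The fix is the one the paper uses: approximate the reaction \emph{from below} by a Lipschitz function vanishing at the origin, $f_k(w)=k^{1-p}w$ for $0\le w\le 1/k$ and $f_k(w)=w^p$ for $w\ge 1/k$, together with the weight truncation $\min\{(1+|x|)^\sigma,k\}$ as in \eqref{approx.min}. Then $f_k(w)\le w^p$, each approximate problem is well posed with compactly supported solutions $w_k$, the sequence $(w_k)$ is monotone increasing, and --- crucially --- any supersolution of the \emph{original} equation \eqref{eq1} is automatically a supersolution of every approximate problem, so a single compactly supported barrier dominates all $w_k$ and the monotone limit inherits both the $L^\infty$ bound and the compact support. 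Within that corrected scheme, your barrier idea is actually sound and is a genuine (more self-contained) alternative to the paper's: the paper imports decreasing, compactly supported self-similar profiles from \cite{IS20b,IS20c,IMS22} (gluing two profiles when $N\ge2$), whereas your parabola-type profile $U=\bigl(A(t)-B|x|^2\bigr)_+^{1/(m-1)}$ works by hand; writing $s=A-B|x|^2$ and multiplying the supersolution inequality by $(m-1)s^{(m-2)/(m-1)}$, the reaction contributes $(m-1)(1+|x|)^\sigma s^{(m+p-2)/(m-1)}$, which is bounded on the support exactly because $m+p\ge2$, so a short-time ODE for $A(t)$ closes the inequality. Your instinct about where $m+p\ge2$ enters is therefore correct; the error is that you deployed the barrier against approximations that it cannot dominate.
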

The proof relies on the construction of so-called \emph{minimal solutions} to the Cauchy problem \eqref{eq1}-\eqref{eq2}, which are obtained through a limit process from a family of approximating (regularized) Cauchy problems. It will be shown that such a minimal solution exists for any compactly supported condition $u_0$ and stays compactly supported for $t\in(0,T)$, a property known as \emph{finite speed of propagation}. The (local in time) finite speed of propagation will be proved with the aid of comparison with solutions and supersolutions in self-similar form introduced in the recent works \cite{IS20b, IS20c, IMS22}.

The statement of Theorem \ref{th.exist} and precedents in the non-weighted case \cite{dPV92} give the idea that uniqueness does not hold true. In fact, we infer from Theorem \ref{th.exist} that at intuitive level the weight $V(x)=(1+|x|)^{\sigma}$ is equivalent to a constant for times $t\in(0,T)$ (that is, while the support of $u(t)$ remains finite), thus a similar property to the non-weighted case concerning non-uniqueness of solutions is expected. The next result characterizes the extent of this non-uniqueness, by showing that we can prescribe in infinitely many ways the evolution of the interface of a solution stemming from the same initial condition.
\begin{theorem}[Non-uniqueness of compactly supported solutions]\label{th.nonuniq}
In our framework and notation, assume that $m+p\geq2$ and let $u_0$ be an initial condition as in \eqref{icond} satisfying moreover that $u_0\in C_0(\real)$. Then there exists $T>0$ and infinitely many weak solutions to the Cauchy problem \eqref{eq1}-\eqref{eq2} for $t\in(0,T)$. The same existence of infinitely many weak solutions holds true for radially symmetric initial conditions $u_0\in C_0(\real^N)$ satisfying \eqref{icond}.
\end{theorem}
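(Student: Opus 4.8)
The plan is to produce, for each admissible prescription of the interface, a distinct weak solution sharing the same initial datum $u_0$, thereby obtaining a continuum of solutions to \eqref{eq1}-\eqref{eq2}. The natural starting point is the minimal solution $\underline{u}$ furnished by Theorem \ref{th.exist}, whose support is contained in a ball of radius $\zeta(t)$, with $\zeta(0)=R$ and $\zeta$ non-decreasing and continuous on $[0,T)$. Since by comparison \emph{every} weak solution dominates $\underline{u}$, the support of any solution must contain that of $\underline{u}$; hence the only way to obtain a genuinely different solution is to force the edge of the support to advance \emph{strictly faster} than $\zeta$. Accordingly, I would fix an arbitrary smooth, strictly increasing function $\xi\colon[0,T)\to(0,\infty)$ with $\xi(0)=R$ and $\xi(t)>\zeta(t)$ for $t\in(0,T)$, and construct a weak solution $u_\xi$ whose edge of support is exactly $\xi(t)$ (the right-hand interface in the one-dimensional case, the outer radius in the radial case).

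The solution $u_\xi$ would be obtained by solving a free-boundary (moving-domain Dirichlet) problem on
\[
Q_\xi=\{(x,t):\ |x|<\xi(t),\ t\in(0,T)\},
\]
with initial datum $u_0$ and homogeneous condition $u=0$ on the lateral boundary $|x|=\xi(t)$, and then extending by zero outside $Q_\xi$. To make this rigorous I would reuse the approximation scheme of Theorem \ref{th.exist}: regularize the reaction to a Lipschitz nonlinearity, solve the resulting non-degenerate parabolic problems on $Q_\xi$, and pass to the limit by means of the a priori bounds together with comparison against the self-similar super- and subsolutions of \cite{IS20b, IS20c, IMS22}. The decisive feature is that $p<1$ renders the reaction non-Lipschitz at $u=0$: since the scalar ODE $\dot v=v^p$ admits a nontrivial solution leaving the origin, the reaction can sustain a positive tail created from zero, so the \emph{maximal} solution of the Dirichlet problem stays strictly positive in the interior of $Q_\xi$ and reaches the prescribed edge $\xi(t)$.

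The heart of the matter, and the step I expect to be the main obstacle, is verifying that the zero-extension $\tilde u_\xi$ is a genuine weak solution on all of $\real^N\times(0,T)$ in the sense of Definition \ref{def.sol}. Assuming interior regularity and integrating \eqref{weak.sol} by parts (twice, via Green's identity) against a test function $\eta$, all interior contributions cancel by the equation and by the kinematic relation at the moving boundary (the latter being trivial because $u_\xi=0$ there), leaving only the lateral boundary term $\int_0^t\int_{|x|=\xi(\tau)}\eta\,\partial_n u_\xi^m\,dS\,d\tau$; the other boundary term vanishes since $u_\xi^m=0$ on $|x|=\xi$. Thus $\tilde u_\xi$ is a weak solution \emph{if and only if} the flux $\partial_n u_\xi^m$ vanishes on the free boundary, i.e.\ no spurious mass crosses the interface. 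This holds precisely because $\xi$ was chosen to advance strictly faster than the minimal interface $\zeta$: the front is pulled ahead of its natural diffusive speed, forcing $u_\xi$ to meet zero tangentially with $\partial_n u_\xi^m=0$ at $|x|=\xi(t)$. I would quantify this decay rate by trapping $u_\xi$ near the edge between $\underline{u}$ from below and a self-similar supersolution with a faster interface from above, which pins the behavior of $u_\xi^m$ and yields the flux-vanishing condition.

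Finally, distinct choices of $\xi$ produce solutions with distinct supports at each $t\in(0,T)$, hence distinct weak solutions, all emanating from the same $u_0$ as in \eqref{icond}; since there is a continuum of admissible interface functions $\xi$, this establishes the existence of infinitely many solutions and the claimed non-uniqueness. The radially symmetric case in $\real^N$ reduces, after passing to the radial variable $r=|x|$, to a one-dimensional problem with a first-order drift term of the form $\frac{N-1}{r}(u^m)_r$, to which the same free-boundary construction and flux-vanishing verification apply with only cosmetic changes.
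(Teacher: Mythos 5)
Your construction stands or falls on the flux-vanishing step, and that step is not proved — nor can it be proved by the barriers you propose. You reduce everything to the claim that the maximal solution $u_\xi$ of the moving-boundary Dirichlet problem satisfies $\partial_n u_\xi^m=0$ on $|x|=\xi(t)$, justified by the heuristic that the front is ``pulled ahead of its natural diffusive speed'' and by trapping $u_\xi$ between the minimal solution $\underline{u}$ and a self-similar supersolution with a faster interface. But near the lateral boundary $|x|=\xi(t)$ the lower barrier $\underline{u}$ is identically zero (its support ends at $\zeta(t)<\xi(t)$), and any supersolution whose interface lies strictly beyond $\xi(t)$ is bounded away from zero there; so neither barrier says anything about the \emph{rate} at which $u_\xi^m$ vanishes at the boundary. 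To get $\partial_n u_\xi^m=0$ you would need an upper barrier vanishing exactly on $|x|=\xi(t)$ with a bound of the type $u_\xi^m\leq C d^{1+\delta}$, $d$ being the distance to the boundary — and constructing such a barrier for an arbitrary admissible $\xi$ is essentially the whole content of the theorem (it is exactly the overdetermined boundary condition that gives \cite{dPV92} its title), not a detail to be checked at the end. In addition, the existence, interior positivity up to the lateral boundary, and boundary regularity (needed even to define the trace of $\partial_n u_\xi^m$) of a maximal solution for the moving-domain Dirichlet problem with non-Lipschitz reaction are themselves asserted without proof.

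Worse, under your hypotheses the claim is likely false, because pointwise domination $\xi(t)>\zeta(t)$ is the wrong condition. The condition in the paper's Proposition \ref{prop.nonuniq1} (following \cite{dPV92}) is on \emph{increments}: $\xi(t_2)-\xi(t_1)\geq s_r(t_2)-s_r(t_1)$ for all $t_1<t_2$. The reason is that $u_\xi$ dominates not only $M(u_0)$ but also the minimal solution restarted at any intermediate time $t_1$ from its own, larger, trace $u_\xi(t_1)$, whose support already reaches $\xi(t_1)$. If $\zeta$ has a waiting time followed by rapid advance, you can choose $\xi$ smooth, strictly increasing and pointwise above $\zeta$ yet advancing more slowly in increments; the restarted minimal interface then overtakes $\xi$, mass piles up against the Dirichlet boundary, $\partial_n u_\xi^m<0$ strictly, and the zero extension is only a weak subsolution in the sense of Definition \ref{def.sol}, not a solution. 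For contrast, the paper never confronts the flux condition at all: it produces solutions as monotone limits of genuine weak solutions — extend $u_0$ by a thin linear tail of height $\epsilon$ out to $R_0+r$, take the minimal solutions $M(u_{\epsilon,0})$, and let $\epsilon\to0$, keeping positivity on $(R_0,R_0+r)$ through comparison with translates of the absolute minimal solution $E$ in \eqref{abs.min} (this is where the non-Lipschitz reaction enters, exactly as in your ODE remark); one such limit for each $r>0$ already gives infinitely many solutions, and the prescribed-interface refinement is then obtained by iterating these jumps over a time discretization under the increment condition. If you wish to salvage the Dirichlet-problem route, you must both strengthen your hypothesis on $\xi$ to the increment condition and build the missing upper barrier at the prescribed interface — at which point you have reconstructed the paper's argument in different clothing.
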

This result is rather similar to the non-weighted case $\sigma=0$, although the proof will be technically more involved: it is not clear whether there exists with $\sigma>0$ a maximal solution, in order to get almost for free a different, second solution (as it holds true for $\sigma=0$, see \cite{dPV90, dPV91}), thus we have to use a different approach. The statement of Theorem \ref{th.nonuniq} will be thus enforced and made more precise in Section \ref{sec.nonuniq}, where we show that the existence of infinitely many solutions is linked to a prescribed evolution of their interface in time, adapting but also slightly improving techniques from \cite{dPV92}.

\medskip

\noindent \textbf{The range} $\mathbf{m+p<2}$. In this complementary range, our main goal is to prove that solutions to the Cauchy problem for Eq. \eqref{eq1} with initial conditions as in \eqref{icond} have \emph{infinite speed of propagation}, that is, they become immediately positive at any point $x\in\real^N$. This fact extends to the non-homogeneous range $\sigma>0$ a similar result holding true for $\sigma=0$, established in \cite{dPV90}, but we use a different approach which gives, along the way, a result of independent interest in the study of the homogeneous equation \eqref{eq4}. We begin by establishing the following \emph{Aronson-B\'enilan estimates} (whose name stems from their celebrated short note \cite{ABE79} on solutions to the porous medium equation) for solutions to the homogeneous case $\sigma=0$. We thus introduce the \emph{pressure function}
$$
v=\frac{m}{m-1}u^{m-1}.
$$
\begin{theorem}[Aronson-B\'enilan estimates when $m+p<2$ and $\sigma=0$]\label{th.ABE}
Assume that $m+p<2$. Let $u$ be a weak solution to Eq. \eqref{eq4} in $\real^N\times(0,T)$ with a continuous initial condition $u_0(x)=u(x,0)$ satisfying \eqref{icond} and let $v$ be the pressure variable introduced above. Then the following inequality
\begin{equation}\label{ABE}
\Delta v\geq-\frac{K}{t}, \qquad K=\frac{N}{N(m-1)+2},
\end{equation}
holds true in the sense of distributions in $\real^N$, that means,
\begin{equation}\label{ABEdist}
\int_0^{T}\int_{\real^N}\left(v(x,t)\Delta\varphi(x,t)+\frac{K}{t}\varphi(x,t)\right)\,dx\,dt\geq0,
\end{equation}
for any test function $\varphi\in C_0^{\infty}(\real^N\times(0,T))$ such that $\varphi\geq0$ in $\real^N\times(0,T)$.
\end{theorem}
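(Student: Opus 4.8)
The plan is to work with the pressure variable $v=\frac{m}{m-1}u^{m-1}$, for which equation \eqref{eq4} transforms into
\[
v_t=(m-1)v\,\Delta v+|\nabla v|^2+g,\qquad g:=m\,u^{m+p-2},
\]
the first two terms being exactly the pressure form of the porous medium operator and $g$ the contribution of the reaction $u^p$. Since $m+p<2$, the exponent $m+p-2$ is negative, so $g>0$; writing $u^{m-1}=\frac{m-1}{m}v$ we may express $g=c\,v^{\beta}$ with $\beta:=\frac{m+p-2}{m-1}<0$ and $c:=m\bigl(\frac{m-1}{m}\bigr)^{\beta}>0$. First I would establish the estimate for smooth, strictly positive solutions, and only at the end recover the weak solution by approximation.

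Second, I would derive the evolution equation for $w:=\Delta v$. Applying the Laplacian to the pressure equation and using the identities $\Delta(vw)=w^2+2\nabla v\cdot\nabla w+v\,\Delta w$ and $\Delta|\nabla v|^2=2|D^2v|^2+2\nabla v\cdot\nabla w$, one obtains
\[
w_t=(m-1)v\,\Delta w+2m\,\nabla v\cdot\nabla w+(m-1)w^2+2|D^2v|^2+\Delta g.
\]
The Cauchy--Schwarz inequality $|D^2v|^2\geq\frac{1}{N}(\Delta v)^2=\frac{1}{N}w^2$ then yields
\[
w_t\geq(m-1)v\,\Delta w+2m\,\nabla v\cdot\nabla w+\alpha\,w^2+\Delta g,\qquad \alpha:=m-1+\tfrac{2}{N}=\tfrac1K.
\]

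The crucial observation is that the reaction only helps where the estimate could fail. A direct computation gives $\Delta g=c\beta v^{\beta-1}w+c\beta(\beta-1)v^{\beta-2}|\nabla v|^2$; since $\beta<0$ one has $\beta(\beta-1)>0$, so the second summand is nonnegative, while the first summand $c\beta v^{\beta-1}w$ is nonnegative precisely on the set $\{w\leq0\}$. Hence $\Delta g\geq0$ wherever $w\leq0$, which is exactly the region relevant for a lower bound, so at any point where $w<0$ the inequality reduces to
\[
w_t\geq(m-1)v\,\Delta w+2m\,\nabla v\cdot\nabla w+\alpha\,w^2,
\]
formally identical to the differential inequality behind the classical Aronson--B\'enilan estimate for the porous medium equation. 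I would then compare $w$ with the spatially constant barrier $\underline{w}(t)=-K/t=-1/(\alpha t)$, which satisfies $\underline{w}_t=\alpha\underline{w}^2$ with equality while $\nabla\underline w=\Delta\underline w=0$. Setting $z:=w-\underline w$ and using $\alpha(w^2-\underline w^2)=\alpha(w+\underline w)z$, a negative interior minimum of $z$ would force at that point $z_t\leq0$, $\nabla z=0$, $\Delta z\geq0$ and $w<\underline w<0$; the degenerate elliptic terms are then nonnegative, $\Delta g\geq0$, and $\alpha(w+\underline w)z>0$, producing a contradiction. As $\underline w\to-\infty$ when $t\to0^+$ while $w$ stays bounded below for regularized data, one has $z>0$ near $t=0$, and the maximum principle propagates $z\geq0$, i.e. $\Delta v\geq-K/t$.

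Finally, the main obstacle is to make this rigorous for a mere weak solution, since the pointwise manipulations require smoothness and strict positivity of $v$, and the equation degenerates where $v=0$. I would therefore run the argument on a family of regularized problems---smoothing and lifting the datum to $u_{0\varepsilon}\geq\varepsilon>0$ and solving on balls $B_R$ with constant boundary values---so that the approximating pressures $v_\varepsilon$ are smooth and bounded away from zero, the computations above are licit, and the comparison principle applies on the cylinder $B_R\times(0,t_1)$; controlling $w_\varepsilon$ on the lateral boundary $\partial B_R$ (through the boundary data or a barrier) is the step requiring the most care. Letting $\varepsilon\to0$ and $R\to\infty$, the uniform bound $\Delta v_\varepsilon\geq-K/t$ survives in the limit and, tested against a nonnegative $\varphi\in C_0^{\infty}(\real^N\times(0,T))$, yields precisely the distributional inequality \eqref{ABEdist}.
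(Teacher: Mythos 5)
Your differential computation is exactly the paper's: the same pressure equation, the same evolution equation for $w=\Delta v$ (your splitting $\Delta g=c\beta v^{\beta-1}w+c\beta(\beta-1)v^{\beta-2}|\nabla v|^2$ is precisely the paper's decomposition $\mathcal{R}_1(x,v)w+\mathcal{R}_2(x,v)$), the same Cauchy--Schwarz step producing the coefficient $\alpha=m-1+\tfrac{2}{N}=1/K$, and your minimum-principle argument with the barrier $\underline{w}(t)=-1/(\alpha t)$ is equivalent to the paper's comparison of $w$ with $\underline{W}=-C/t$ for the operator $\mathcal{L}$: your observation that $c\beta v^{\beta-1}w\geq0$ on the set $\{w\leq0\}$ is the same sign information the paper exploits when it computes $\mathcal{L}\underline{W}=C\mathcal{R}_1/t<0$. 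So the heart of the proof matches the paper.

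The gaps are in your regularization scheme, and they are genuine. First, you pose the approximate problems on balls $B_R$ with Dirichlet data, and you yourself flag that you cannot control $w_\varepsilon=\Delta v_\varepsilon$ on the lateral boundary $\partial B_R\times(0,t_1)$; without that control the comparison on the cylinder simply does not close, and this is not a routine step that can be deferred. The paper avoids the issue altogether: it lifts the datum to $u_{0,k}=u_0+1/k$ and solves the Cauchy problem in the whole space, using the existence and comparison theory of \cite{dPV91} (which also gives $u_k\geq1/k$, since positive constants are subsolutions) together with the classical regularity theory of \cite{LSU}, applicable because $u_k$ is bounded away from zero; the operator is then uniformly parabolic on $\real^N\times(0,T)$, all relevant derivatives are bounded, and there is no lateral boundary at all. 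Second, and more fundamental as a matter of logic: the theorem asserts the estimate for an \emph{arbitrary given} weak solution $u$, whereas your argument yields the estimate only for the limit of your approximations, which you never identify with $u$. The paper handles this explicitly: the approximations $u_k$ decrease to a weak solution $\overline{u}$ of the Cauchy problem with datum $u_0$, and the uniqueness result of \cite{dPV90} (valid in the range $m+p<2$ for non-trivial continuous bounded data) forces $\overline{u}=u$; only then can one pass to the limit in the pointwise inequalities for $v_k$ and obtain \eqref{ABEdist} for the given $u$. Both gaps are fillable---the second by invoking \cite{dPV90}, the first by abandoning the bounded-domain setup in favour of the whole-space one---but as written your proof does not reach the stated theorem.
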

\noindent \textbf{Remark.} Theorem \ref{th.ABE} is expected to hold true also \emph{for any $\sigma>0$}, provided $N\geq2$. We give a formal proof of this fact at the end of Section \ref{sec.ABE}. However, transforming it into a rigorous proof is impossible by now for $\sigma>0$, as we are lacking a well-posedness result for the Cauchy problem \eqref{eq1}-\eqref{eq2} in the range $m+p<2$. This is why, we introduce this formal proof as a remark.

We then employ the Aronson-B\'enilan estimates established in Theorem \ref{th.ABE} and some consequences of them in order to prove the infinite speed of propagation of the supports of solutions to Eq. \eqref{eq1} when $m+p-2<0$, which is strongly contrasting to the results established in the range $m+p\geq2$. We include the range $\sigma=0$ in the statement, as our approach gives an alternative proof to the one in \cite{dPV90}.
\begin{theorem}[Infinite speed of propagation when $m+p<2$]\label{th.ISP}
If the exponents $m$ and $p$ satisfy $m+p<2$, Eq. \eqref{eq1} has the property of infinite speed of propagation for any $\sigma\geq0$, that is, every weak solution (if it exists) to the Cauchy problem associated to Eq. \eqref{eq1} with continuous initial condition $u_0$ as in \eqref{icond} satisfies $u(x,t)>0$ for any $x\in\real^N$ and $t>0$.
\end{theorem}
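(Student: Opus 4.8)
The plan is to reduce the case $\sigma>0$ to the homogeneous case $\sigma=0$, and then to exploit the Aronson-B\'enilan estimate \eqref{ABE} in order to rule out the vanishing of the solution. For the reduction, note that $(1+|x|)^{\sigma}\geq1$ for every $\sigma\geq0$, so any weak solution $u$ to Eq. \eqref{eq1} is automatically a weak supersolution to the unweighted equation \eqref{eq4} with the same data $u_0$. Since the minimal solution $\underline{u}$ to \eqref{eq4} is constructed as a monotone limit of regularized problems and thus lies below every supersolution with initial trace $\geq u_0$, it suffices to prove that this minimal solution is strictly positive; then $u\geq\underline{u}>0$. In this way the whole difficulty is concentrated in the range $\sigma=0$, which is exactly where the rigorous estimate of Theorem \ref{th.ABE} is available (recall from the Remark that, for $\sigma>0$, \eqref{ABE} is so far only formal).

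The core of the argument is a one-sided differential inequality for the pressure. Writing $v=\frac{m}{m-1}u^{m-1}$ and differentiating, a weak solution of \eqref{eq4} satisfies, at least formally,
\begin{equation*}
v_t=(m-1)v\Delta v+|\nabla v|^2+m\,u^{m+p-2}.
\end{equation*}
Using $v\geq0$, discarding the nonnegative gradient term, inserting the Aronson-B\'enilan bound $\Delta v\geq-K/t$, and finally expressing $u^{m+p-2}=c_0\,v^{-\gamma}$ with
\begin{equation*}
\gamma=\frac{2-m-p}{m-1}>0,\qquad c_0=\left(\frac{m}{m-1}\right)^{\gamma}>0,
\end{equation*}
I obtain the key inequality
\begin{equation*}
v_t\geq-(m-1)\frac{K}{t}\,v+m\,c_0\,v^{-\gamma}.
\end{equation*}
Here the hypothesis $m+p<2$ enters decisively: it forces $\gamma>0$, so that the source $v^{-\gamma}$ \emph{blows up} as $v\to0^{+}$. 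When $m+p\geq2$ the exponent is instead nonnegative, the source stays bounded, and the free boundary can persist, in agreement with the finite-speed-of-propagation results of the previous sections.

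From this inequality positivity follows \emph{without} any comparison with external barriers, which is essential because the Cauchy problem is ill-posed (in particular $\underline{u}$ does not dominate the homogeneous subsolution $[(1-p)t]^{1/(1-p)}$). Setting $V(t)=\inf_{x\in\real^N}v(x,t)$, at a spatial minimum one has $\nabla v=0$ and $\Delta v\geq0$, so $V$ obeys
\begin{equation*}
V'(t)\geq m\,c_0\,V(t)^{-\gamma}-(m-1)\frac{K}{t}\,V(t).
\end{equation*}
If $V(t_0)=0$ for some $t_0>0$ --- which is precisely what finite speed of propagation would require --- the singular term $m\,c_0\,V^{-\gamma}$ dominates and forces $V$ to leave the value $0$ instantaneously; quantitatively, the comparison ODE $y'=m\,c_0\,y^{-\gamma}$ with $y(t_0)=0$ admits the positive branch $y(t)=[(\gamma+1)m\,c_0\,(t-t_0)]^{1/(\gamma+1)}$, so that $V$ cannot vanish on any interval. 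Hence $v(x,t)>0$, that is $u(x,t)>0$, for every $x\in\real^N$ and $t>0$.

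The main obstacle is to convert this pointwise reasoning into a rigorous argument, since \eqref{ABE} holds only distributionally and a weak solution need not be regular enough to evaluate the pressure equation or the infimum $V(t)$ classically. I would circumvent this by working on the approximating smooth problems used to build the minimal solution, where the Aronson-B\'enilan estimate and the pressure identity are genuine, deriving there a lower bound of the form $v\geq c\,t^{1/(\gamma+1)}$ on compact sets (the differential inequality for the regularized minimum being justified by Hamilton's trick), and only afterwards passing to the limit, the singular source guaranteeing that the bound survives. The delicate technical points are the control of the product $(m-1)v\Delta v$ against the nonnegative measure $\Delta v+K/t$, and the uniformity in the regularization parameter of the resulting lower barrier.
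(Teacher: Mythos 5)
Your reduction of the case $\sigma>0$ to $\sigma=0$ (using $(1+|x|)^{\sigma}\geq 1$ and comparison from below with the unweighted problem) is exactly the paper's, and your starting point --- the pressure equation plus the Aronson--B\'enilan bound --- coincides with it as well. The problem lies in your core positivity mechanism. The inequality $v_t\geq -(m-1)\frac{K}{t}v+m c_0 v^{-\gamma}$ is a consequence of the pressure equation \eqref{eq.pres2}, which is an equivalent rewriting of \eqref{eq4} \emph{only at points where $u>0$}. On any open set where $u$ vanishes, $u\equiv 0$ solves \eqref{eq4} pointwise, because the true reaction term there is $u^p=0$, not $+\infty$: the singular source $v^{-\gamma}$ is an artifact of dividing by powers of $u$ and is simply absent where $v=0$. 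Consequently the differential inequality for $V(t)=\inf_{x}v(x,t)$ fails precisely at the value $V=0$ where you invoke it, and nothing forces $V$ to leave zero. A concrete check that this reasoning proves too much: for $m+p=2$ the pressure-equation source is the positive constant $K(m,p)$, so the same infimum argument would give $V'(t_0)\geq K(m,p)>0$ whenever $V(t_0)=0$ and would ``rule out'' compactly supported solutions --- contradicting the finite speed of propagation that does hold in that case (for $\sigma=0$ by \cite{dPV90,dPV91}, and for $\sigma>0$ by Theorem \ref{th.exist}). The dichotomy between $m+p<2$ and $m+p\geq 2$ cannot be detected by evaluating the pressure equation at a spatial minimum.

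Your proposed repair is also misdirected: the ``approximating smooth problems used to build the minimal solution'' are the problems $(P_k)$ whose reaction $f_k$ is Lipschitz at $w=0$, so their pressure source behaves like $mk^{1-p}w^{m-1}\to 0$ as $w\to 0$ and their solutions $w_k$ are genuinely compactly supported; no bound $v_k\geq c\,t^{1/(\gamma+1)}$ uniform in $k$ can hold for them, since the regularization destroys exactly the singularity your argument needs. The approximation that works is from \emph{above}, $u_{0,k}=u_0+1/k$, as in the paper's proof of Theorem \ref{th.ABE}: there $v_k>0$ everywhere, your ODE comparison with the barrier $\epsilon t^{1/(\gamma+1)}$ (or even direct comparison with the spatially homogeneous solution $[(1-p)t+k^{-(1-p)}]^{1/(1-p)}$) gives a lower bound uniform in $k$, and one then identifies the decreasing limit with the given weak solution via the quasi-uniqueness of \cite{dPV90}, valid only for $m+p<2$ --- which is where the case distinction truly enters. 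With that substitution your pointwise scheme could be completed and would constitute a different route from the paper's. The paper instead avoids any evaluation at or beyond the free boundary: it multiplies the pressure equation by $v^{(2-m-p)/(m-1)}$, so that every term becomes continuous across the interface and the source becomes the constant $K(m,p)$, integrates against cutoffs $\varphi_n\equiv 1$ on $B(0,n)$ as in \eqref{interm10}, and derives the contradiction from the fact that $K(m,p)\int\int\varphi_n$ grows like $n^N$ while, for a compactly supported solution, all remaining terms stay bounded. As written, your proposal has a genuine gap.
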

The rest of the paper is devoted to the proofs of the main theorems, following the outlines explained in the comments near their statements. We then give at the end of the paper a list of open problems and possible extensions of our study that we believe to be interesting for future developments.

\section{Existence and finite speed of propagation when $m+p\geq2$}\label{sec.localex}

The goal of this section is to prove Theorem \ref{th.exist} in the range of parameters $m+p\geq2$. Let $u_0$ be a continuous initial condition as in \eqref{icond}. The scheme of the proof is based on the construction of a \emph{minimal solution} via an approximation process and showing that this minimal solution is a weak solution to the Cauchy problem \eqref{eq1}-\eqref{eq2} with compact support at any time $t\in(0,T)$ for some $T>0$.
\begin{proposition}\label{prop.minimal}
There exists some $T>0$ and a continuous weak solution $\underline{u}$ defined and compactly supported for $t\in(0,T)$ to the Cauchy problem \eqref{eq1}-\eqref{eq2} with $u_0$ as above such that for any other weak solution $u$ to the Cauchy problem \eqref{eq1}-\eqref{eq2} (if it exists), we have
$$
\underline{u}(x,t)\leq u(x,t), \qquad {\rm for \ any} \ (x,t)\in\real^N\times(0,T).
$$
\end{proposition}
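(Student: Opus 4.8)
The plan is to obtain $\underline{u}$ as the monotone limit of solutions to a family of regularized problems in which the non-Lipschitz reaction $u^p$ is replaced by Lipschitz approximations \emph{from below}, and then to identify this limit as a minimal weak solution that remains compactly supported on a short time interval by trapping it underneath a compactly supported self-similar supersolution.

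First I would set up the approximating problems. Since $p\in(0,1)$, the reaction $f(u)=u^p$ is continuous and nondecreasing but fails to be Lipschitz at $u=0$; to regularize it from below I would take
\[
f_n(u)=\min\{u^p,\,nu\}, \qquad n\geq 1,
\]
which is globally Lipschitz, nondecreasing, satisfies $f_n(0)=0$, and increases pointwise to $u^p$ as $n\to\infty$. For each $n$ I would solve
\[
\partial_t u_n=\Delta u_n^m+(1+|x|)^{\sigma}f_n(u_n),
\]
with data $u_0$, for instance on an exhausting family of balls $B(0,R_k)$ with zero lateral data and then letting $R_k\to\infty$, so that each $u_n$ is the unique nonnegative solution furnished by the standard theory for porous-medium-type equations with a Lipschitz source. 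Because $f_n\leq f_{n+1}$ and the reaction is Lipschitz, the comparison principle for these regularized problems applies and yields monotonicity, $u_n\leq u_{n+1}$.

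Next I would establish the uniform bounds and the compact support. Here I would invoke, in the range $m+p\geq 2$, a compactly supported self-similar supersolution $\overline{U}$ of \eqref{eq1} from \cite{IS20b, IS20c, IMS22}, fitted so that $u_0\leq\overline{U}(\cdot,0)$; such a barrier is available on a maximal interval $(0,T)$ depending on $u_0$. Since $f_n\leq f$ and $\overline{U}$ carries the same weight, $\overline{U}$ is a supersolution of each regularized problem as well, so comparison gives $u_n\leq\overline{U}$ on $\real^N\times(0,T)$, uniformly in $n$. This single estimate delivers at once the local $L^{\infty}$ bound (whence the restriction to $t\in(0,T)$) and the finite speed of propagation: each $u_n$, and therefore the limit, stays compactly supported for $t<T$. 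With these uniform bounds in hand I would pass to the limit: the monotone, bounded sequence converges to $\underline{u}=\lim_n u_n$, and standard equicontinuity and regularity estimates for degenerate parabolic equations (in the spirit of \cite{AdB91}) provide local H\"older continuity and allow one to pass to the limit in the weak formulation \eqref{weak.sol}, so that $\underline{u}$ is a continuous weak solution attaining $u_0$ in $L^1_{\rm loc}$ in the sense of Definition \ref{def.sol}.

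Finally, minimality would follow from a comparison against an arbitrary weak solution. Given any weak solution $u$ to \eqref{eq1}-\eqref{eq2}, the inequality $f_n\leq f$ makes $u$ a weak supersolution of the $n$-th regularized problem; comparison then forces $u_n\leq u$ for every $n$, and letting $n\to\infty$ yields $\underline{u}\leq u$. I expect the principal difficulty to lie precisely in this last comparison step, since it must be carried out within the weak-solution framework of Definition \ref{def.sol}, in which the full Laplacian has been transferred onto the test function and the diffusion $\Delta u^m$ is degenerate; establishing a comparison principle between a Lipschitz-reaction (sub)solution and a genuine, possibly nonsmooth weak solution of the degenerate problem is the delicate technical core, and I would handle it by a duality and test-function argument adapted to the degenerate diffusion, exploiting the monotonicity of $u\mapsto u^p$.
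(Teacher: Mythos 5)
Your overall strategy coincides with the paper's: the same Lipschitz regularization of the reaction from below (your $f_n(u)=\min\{u^p,nu\}$ is exactly the paper's $f_k$ up to reparametrization $n=k^{1-p}$), a monotone limit, a uniform bound by a compactly supported self-similar supersolution built from \cite{IS20b, IS20c, IMS22}, and minimality obtained by viewing an arbitrary weak solution as a supersolution of each regularized problem and comparing there. Two of your variations are harmless: the paper truncates the weight, replacing $(1+|x|)^{\sigma}$ by $\min\{(1+|x|)^{\sigma},k\}$, so that the approximate problems can be solved directly on $\real^N$ with bounded coefficients, whereas you exhaust by balls; and your concern that the final comparison step is ``the delicate technical core'' is overstated, since comparison is only ever invoked for the regularized problems, where the reaction is Lipschitz and the coefficient bounded (after truncation, or on each ball), so the standard theory \cite{DiB83, Sa83} applies and no duality argument for the degenerate non-Lipschitz equation is needed.

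There is, however, one genuine gap: the barrier $\overline{U}$ you invoke does not exist as stated when $N\geq2$. The cited works furnish bounded, decreasing, compactly supported self-similar profiles only in dimension $N=1$, and even there only for $\sigma>2(1-p)/(m-1)$ (for smaller $\sigma$ one uses $(1+|x|)^{\sigma}\leq(1+|x|)^{\sigma_1}$ and borrows the barrier of a larger exponent $\sigma_1$). In dimension $N\geq2$, profiles with an interface necessarily blow up as $\xi\to0$ (like $\xi^{-(N-2)/m}$ for $N\geq3$, or $(-\ln\xi)^{1/m}$ for $N=2$), while profiles bounded at the origin are increasing near $\xi=0$; no single profile is simultaneously bounded, decreasing and compactly supported. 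The paper's Step 3 repairs exactly this by gluing: it takes $z=\min\{z_1,z_2\}$, where $z_1$ comes from a bounded profile $f_1$ with $f_1(0)=\|u_0\|_\infty$ and $z_2$ from a decreasing profile $f_2$ with interface and vertical asymptote at the origin, and uses that the minimum of two solutions is a supersolution. Without this construction (or a substitute), your uniform $L^\infty$ bound, the existence of the limit on a nontrivial time interval, and the finite speed of propagation are all unproven for $N\geq2$, i.e., precisely the content that makes the proposition hold in general dimension.
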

\begin{proof}
We divide the proof into three steps for the reader's convenience. Let us stress at this point that, while Step 1 of the proof is an adaptation of an analogous construction in \cite{dPV91}, the idea in Steps 2 and 3 strongly departs from the one used in the previously mentioned work and employs results on self-similar solutions to Eq. \eqref{eq3} published recently by the authors.

\medskip

\noindent \textbf{Step 1. The construction of the minimal solution.} We approximate the Cauchy problem \eqref{eq1}-\eqref{eq2} by the following sequence of Cauchy problems for any positive integer $k\geq1$
\begin{equation}\label{approx.min}
(P_k) \ \left\{\begin{array}{ll}w_t=\Delta w^m+\min\{(1+|x|)^{\sigma},k\}f_k(w), & (x,t)\in\real^N\times(0,\infty),\\
w(x,0)=u_0(x), & x\in\real^N\end{array}\right.
\end{equation}
where
$$
f_k(w)=\left\{\begin{array}{ll}\left(\frac{1}{k}\right)^{p-1}w, & {\rm if} \ 0\leq w\leq\frac{1}{k},\\
w^p, & {\rm if} \ w\geq\frac{1}{k}\end{array}\right.
$$
Since the nonlinearity in \eqref{approx.min} is of the form $g(x)h(w)$ with $g\in L^{\infty}(\real^N)$, $g(x)\geq 1$ for any $x\in\real^N$ and $h$ is a Lipschitz function, we infer by standard results for quasilinear parabolic equations (see for example \cite{DiB83, Sa83}) that the Cauchy problem $(P_k)$ admits a unique solution $w_k$ defined for $(x,t)\in\real^N\times(0,\infty)$, which is compactly supported and continuous. The comparison principle is in force for the Cauchy problem $(P_k)$ and $w_{k+1}$ is a supersolution to the problem $(P_k)$, thus $w_{k+1}\geq w_k$ for any $k\geq1$. This allows us to introduce the pointwise (and monotone increasing) limit (which might become infinite starting from some finite time)
$$
\underline{u}(x,t)=\lim\limits_{k\to\infty}w_k(x,t)<\infty, \qquad (x,t)\in\real^N\times(0,T_{\infty}),
$$
which is well defined provided that $T_{\infty}>0$. This fact will follow from the construction of a ``universal" family of supersolutions in self-similar form which is postponed to Step 2 (in dimension $N=1$) and Step 3 (in dimension $N\geq2$) below. Moreover, the solutions $w_k$ are thus uniformly bounded on $\real^N\times(0,T)$ for some $T=T(u_0)>0$ depending on $u_0$, and this uniform boundedness, together with classical results in \cite{DiB83, Sa83}, imply that the family $(w_k)_{k\geq1}$ is uniformly equicontinuous in $\real^N\times[0,T]$, hence there exists a subsequence (relabeled also $w_k$ for simplicity) which converges locally uniformly to the same function $\underline{u}(x,t)$.

Then the fact that $\underline{u}$ is a continuous weak solution to \eqref{eq1}-\eqref{eq2} for $(x,t)\in\real^N\times(0,T_{\infty})$ follows now readily from the previous convergences (assuming for now the outcome of Steps 2 and 3 below): indeed, Lebesgue's monotone convergence theorem ensures the assumptions (b) and (c) in Definition \ref{def.sol}, while the uniform bound by the supersolutions constructed in Steps 2 and 3 below, together with the equicontinuity, give that $\underline{u}$ satisfies the regularity assumption (a) in Definition \ref{def.sol}. Moreover, if $u$ is another weak solution to the Cauchy problem \eqref{eq1}-\eqref{eq2}, then it is a supersolution to the problem $(P_k)$ for any $k\geq1$, whence $u(x,t)\geq w_k(x,t)$ for any $k\geq1$ and $(x,t)\in\real^N\times(0,\infty)$ and by passing to the limit $u\geq\underline{u}$ in $\real^N\times(0,T_{\infty})$, proving the minimality of $\underline{u}$.

\medskip

\noindent \textbf{Step 2. Supersolutions in dimensions $N=1$.} We are left with the task of obtaining a uniform bound from above for all solutions $w_k$ to the Cauchy problems $(P_k)$, $k\geq1$, at least up to some (short) finite time. This follows by comparison with suitable supersolutions in self-similar form constructed in recent works by the authors such as \cite{IMS22} for $N\geq2$ and $m+p\geq2$, respectively \cite{IS20b, IS20c} in dimension $N=1$ and either $m+p>2$ or $m+p=2$. If we restrict ourselves only to dimension $N=1$ for this step, it is shown in the previously quoted works that, in our range of exponents together with the extra condition $\sigma>2(1-p)/(m-1)$, there are radially symmetric blow-up self-similar supersolutions to Eq. \eqref{eq3} in the form
\begin{equation}\label{SSS}
u(x,t)=(T-t)^{-\alpha}f(|x|(T-t)^{\beta}), \ \alpha=\frac{\sigma+2}{L}, \ \beta=\frac{m-p}{L}
\end{equation}
where $L>0$ has been defined in \eqref{const}, such that their self-similar profiles $f$ solve the differential equation
\begin{equation}\label{SSODE}
(f^m)''(\xi)-\alpha f(\xi)+\beta\xi f'(\xi)+\xi^{\sigma}f(\xi)^p=0, \qquad \xi=|x|^{\sigma}(T-t)^{\beta}.
\end{equation}
Moreover, it is established in \cite[Proposition 4.1]{IS20b} if $m+p>2$ and in \cite[Proposition 3.1]{IS20c} if $m+p=2$ that the self-similar profiles $f$ of the supersolutions in the form \eqref{SSS} fulfill the following two additional properties:

$\bullet$ $f$ is strictly decreasing until reaching the zero level: $f(0)=A>0$, $f'(\xi)<0$ at points $\xi\geq0$ where $f(\xi)>0$.

$\bullet$ $f$ presents an interface at some finite point $\xi_0\in(0,\infty)$, that is, $f(\xi_0)=0$, $f(\xi)>0$ for any $\xi\in(0,\xi_0)$ and $(f^m)'(\xi_0)=0$.

Here the blow-up time $T>0$ is a free parameter and the functions defined in \eqref{SSS} are actually weak solutions to Eq. \eqref{eq3} except at the point $x=0$ where the condition $(f^m)'(0)=0$ is not fulfilled in order to be a weak solution. We adapt these supersolutions to our Eq. \eqref{eq1} by defining
\begin{equation}\label{supersol}
z(x,t)=(T-t)^{-\alpha}f((1+|x|)(T-t)^{\beta}),
\end{equation}
with $\alpha$, $\beta$ and $f$ as in \eqref{SSS}. Since $f$ is a supersolution to the differential equation \eqref{SSODE}, it is straightforward to check that $z$ is a supersolution to \eqref{eq1}. The amplitude $s(t)$ of the support of $z(t)$ at some $t\in(0,T)$ is given by $\xi_0=(1+s(t))(T-t)^{\beta}$ or equivalently
$$
s(t)=(T-t)^{-\beta}\xi_0-1\to\infty, \qquad {\rm as} \ t\to T.
$$
Moreover, the above supersolutions have been established in \cite{IS20b, IS20c} only under the condition $\sigma>2(1-p)/(m-1)$. However, if $0<\sigma\leq2(1-p)/(m-1)$, it is obvious from the fact that $1+|x|\geq1$, that $(1+|x|)^{\sigma}\leq(1+|x|)^{\sigma_1}$ for any $\sigma_1>2(1-p)/(m-1)$ and for any $x\in\real$. It thus follows that the supersolutions constructed for Eq. \eqref{eq1} with such an exponent $\sigma_1>2(1-p)/(m-1)$ as above, will serve also as supersolutions to Eq. \eqref{eq1} with exponents $\sigma$ smaller. Since $T$ is a free parameter and $u_0$ is bounded and compactly supported, we can choose some $T_0>0$ sufficiently small such that
\begin{equation}\label{interm11}
z(x,0)=T_0^{-\alpha}f((1+|x|)T_0^{\beta})\geq\|u_0\|_{\infty}\geq u_0(x)
\end{equation}
for any $x\in{\rm supp}\,u_0$. This, together with the fact that any function $z$ as in \eqref{supersol} is a supersolution to Eq. \eqref{eq1}, gives that $z$ is also a supersolution to the Cauchy problem $(P_k)$ for any $k\geq1$ for which the comparison principle applies to give that
$$
w_k(x,t)\leq z(x,t), \qquad {\rm for \ any} \ (x,t)\in\real\times(0,T_0).
$$
In particular we infer on the one hand that $T_{\infty}\geq T_0>0$ as claimed, and on the other hand by passing to the limit as $k\to\infty$ we get
$$
\underline{u}(x,t)\leq z(x,t), \qquad {\rm for \ any} \ (x,t)\in\real\times(0,T_0)
$$
which proves the finite speed of propagation of the support of $\underline{u}$ at least for some short interval of time.

\medskip

\noindent \textbf{Step 3. Supersolutions in dimension $N\geq2$.} In this case, there are no longer bounded and decreasing supersolutions in the self-similar form used in Step 2. We thus construct suitable supersolutions to Eq. \eqref{eq3} by joining two different self-similar profiles. We begin again by fixing $\sigma>2(1-p)/(m-1)$ as a first case. We consider in general self-similar solutions to Eq. \eqref{eq3} in the same form \eqref{SSS} as above, with the same exponents $\alpha$ and $\beta$, but whose profiles solve the differential equation
\begin{equation}\label{SSODE.N}
(f^m)''(\xi)+\frac{N-1}{\xi}(f^m)'(\xi)-\alpha f(\xi)+\beta\xi f'(\xi)+\xi^{\sigma}f(\xi)^p=0, \qquad \xi=|x|^{\sigma}(T-t)^{\beta}.
\end{equation}
On the one hand, the analysis in \cite[Proposition 4.1]{IMS22} if $m+p>2$, respectively \cite[Proposition 4.2 and Lemma 4.3]{IMS22} if $m+p=2$, ensures that, given $\xi_0\in(0,\xi_*)$ sufficiently small, there exists a decreasing self-similar profile $f_2(\xi)$ solution to the differential equation \eqref{SSODE.N} having an interface exactly at $\xi=\xi_0$ and a vertical asymptote as $\xi\to0$ with local behavior
$$
f_2(\xi)\sim\left\{\begin{array}{ll}C\xi^{-(N-2)/m}, & {\rm if} \ N\geq3,\\ C(-\ln\,\xi)^{1/m}, & {\rm if} \ N=2,\end{array}\right.
$$
as it follows from \cite[Lemma 3.2 and Lemma 3.5]{IMS22}, where $C>0$ designs a positive constant that might change from one case to another. On the other hand, the analysis performed in \cite[Lemma 3.1]{IMS22} implies that, for any $A>0$, there exists a profile $f_1(\xi;A)$ local solution to Eq. \eqref{SSODE.N} such that
$$
f_1(0;A)=A, \qquad f_1(\xi;A)\sim\left[A^{m-1}+\frac{\alpha(m-1)}{2mN}\xi^2\right]^{1/(m-1)}, \qquad {\rm as} \ \xi\to0,
$$
which is increasing in a right-neighborhood of the origin up to some maximum point $\xi_1(A)>0$. Thus, given an initial condition $u_0$ as in \eqref{icond}, one can choose for example $A=\|u_0\|_{\infty}$, fix some $\xi_0\in(0,\xi_1(A))\cap(0,\xi_*)$ such that there exists a decreasing profile $f_2(\xi)$ as above with vertical asymptote as $\xi\to0$ and edge of the support at $\xi=\xi_0$. These two profiles have to cross at some point $\overline{\xi}\in(0,\xi_1(A))$. We finally define a self-similar supersolution to Eq. \eqref{eq1} as follows:
\begin{equation}\label{supersol.N}
z(x,t)=(T-t)^{-\alpha}f((1+|x|)(T-t)^{\beta}), \qquad f(\xi)=\left\{\begin{array}{ll}f_1(\xi;A), & \xi\in[0,\overline{\xi}],\\ f_2(\xi), &\xi\geq\overline{\xi}, \end{array}\right.
\end{equation}
and notice that this is indeed a supersolution for any $T>0$, as it can be described alternatively as
$$
z(x,t)=\min\{z_1(x,t),z_2(x,t)\}, \qquad z_i(x,t)=(T-t)^{-\alpha}f_i((1+|x|)(T-t)^{\beta}), \qquad i=1,2,
$$
and $z_1$, $z_2$ are in fact solutions. The same considerations about the magnitude of $\sigma$ as in the end of Step 2 ensure that the supersolution defined in \eqref{supersol.N} works also for values of $\sigma$ smaller than $2(1-p)/(m-1)$. We thus complete the proof by choosing a sufficiently small $T_0>0$ such that \eqref{interm11} holds true on the support of $u_0$, and notice that $z$ is a supersolution to the approximating problems $(P_k)$ leading to the minimal solution. This again implies that $T_{\infty}\geq T_0>0$, completing the proof.
\end{proof}
The solution $\underline{u}$ constructed in the proof of Proposition \ref{prop.minimal} will be referred as \emph{the minimal solution} to the Cauchy problem \eqref{eq1}-\eqref{eq2} and denoted by $M(u_0)$ in the sequel. Notice that, the above proof does not imply that necessarily the minimal solution blows up in finite time. In fact, it might blow up or not according to whether $\sigma$ is larger or smaller than $2(1-p)/(m-1)$, but this is not easy to prove once we miss a comparison principle, and we refer the reader to the section of open problems at the end.

\section{Non-uniqueness for $m+p\geq2$}\label{sec.nonuniq}

A natural question raised by the previous section is whether in the case $m+p\geq2$ and for compactly supported and continuous data $u_0$ the minimal solution $\underline{u}$ constructed in Proposition \ref{prop.minimal} is the only solution to the Cauchy problem \eqref{eq1}-\eqref{eq2}. For $\sigma=0$ non-uniqueness follows easily from the construction of a different solution called maximal, which is shown to be strictly positive for any $t>0$ and thus different from $\underline{u}$, see \cite{dPV91}. We cannot construct such a maximal solution to Eq. \eqref{eq1}, since strictly positive solutions might not even exist at all in some ranges of $\sigma$ (see a comment with a formal intuition for such non-existence in the final Section of this work). In order thus to prove the existence of multiple solutions (and in fact an infinite number of them) we adapt to our case the deeper results in \cite{dPV92}, where infinitely many solutions to the Cauchy problem \eqref{eq4}-\eqref{eq2} are constructed, based on a prescribed evolution of the interface of them. As a preliminary fact, let us recall that Eq. \eqref{eq4} (that is, the case $\sigma=0$) admits an \emph{absolute minimal solution} in self-similar form
\begin{equation}\label{abs.min}
E(x,t)=t^{1/(1-p)}\varphi(|x|t^{-\gamma}), \qquad \gamma=\frac{m-p}{2(1-p)}
\end{equation}
with zero initial condition $E(x,0)=0$ for any $x\in\real$, according to \cite{dPV90}. It is also shown in \cite{dPV90} that such solution lies below any solution (and supersolution) to Eq. \eqref{eq4} and consequently also below any solution to Eq. \eqref{eq1} (which is a strict supersolution to Eq. \eqref{eq4}). Moreover, the profile $\varphi$ of $E$ is non-increasing and compactly supported, thus ${\rm supp}\,E\subseteq[-\varrho_0t^{\gamma},\varrho_0t^{\gamma}]$ where $\varrho_0>0$ is the right-interface point of the profile $\varphi$.

With these elements and notation in mind, we can prove Theorem \ref{th.nonuniq} as an immediate consequence of a stronger result adapting a construction from \cite{dPV92}. More precisely, by prescribing the behavior of the interface (under some limitations) we can obtain a solution to the Cauchy problem \eqref{eq1}-\eqref{eq2} with exactly that given interface at every (small) time $t\in(0,T)$. We formalize this below in dimension $N=1$, but the analogous result for radially symmetric solutions in dimension $N\geq2$ will be then completely analogous.
\begin{proposition}\label{prop.nonuniq1}
Let $u_0\in C_0(\real)$ be a compactly supported initial condition such that ${\rm supp}\,u_0=[r_0,R_0]$ for some $r_0$, $R_0\in\real$. Let $M(u_0)$ be the minimal solution to the Cauchy problem \eqref{eq1}-\eqref{eq2} with initial condition $u_0$ defined for $t\in(0,T_0)$ and denote by $s_l(t)$, $s_r(t)$ the left and right interfaces of $M(u_0)(t)$ for $t\in(0,T_0)$, that is, ${\rm supp}\,M(u_0)(t)=[s_l(t),s_r(t)]$. Let $\xi_l(t)$, $\xi_r(t)$ be two continuous functions of time such that $\xi_l(0)\leq r_0$, $\xi_r(0)\geq R_0$ and
$$
\xi_l(t_1)-\xi_l(t_2)\geq s_l(t_1)-s_l(t_2), \qquad \xi_r(t_2)-\xi_r(t_1)\geq s_r(t_2)-s_r(t_1),
$$
for any $t_1$, $t_2\in(0,T_0)$ such that $t_1<t_2$. Then there exists at least a shorter time interval $(0,T)\subset(0,T_0)$ and a solution $u$ to the Cauchy problem \eqref{eq1}-\eqref{eq2} with initial condition $u_0$ defined for $t\in(0,T)$ such that its left and right interfaces are given exactly by $\xi_l(t)$ and $\xi_r(t)$ for any $t\in(0,T)$.
\end{proposition}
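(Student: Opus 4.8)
The plan is to follow the strategy of \cite{dPV92}, exploiting the fact that the prescribed interfaces satisfy $\xi_l(t)\le s_l(t)$ and $\xi_r(t)\ge s_r(t)$ (as one checks directly from the increment hypotheses together with $\xi_l(0)\le r_0$, $\xi_r(0)\ge R_0$), so that the target support $[\xi_l(t),\xi_r(t)]$ always \emph{contains} the support $[s_l(t),s_r(t)]$ of the minimal solution $M(u_0)$. The extra region $[\xi_l(t),s_l(t))\cup(s_r(t),\xi_r(t)]$ must then be filled with strictly positive values, and the mechanism making this possible is precisely the non-Lipschitz reaction $(1+|x|)^{\sigma}u^p$ with $p\in(0,1)$, which creates positivity out of the zero level. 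The quantitative tool encoding this is the absolute minimal solution $E$ from \eqref{abs.min}: for every $(a,\tau)$ the space-time translate $E_{a,\tau}(x,t):=E(x-a,t-\tau)$, extended by $0$ for $t\le\tau$, is again a solution of Eq. \eqref{eq4}, hence a subsolution of Eq. \eqref{eq1}, which turns on at the point $a$ at time $\tau$ and spreads at the self-similar rate $\varrho_0(t-\tau)^{\gamma}$.

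First I would build a subsolution $\underline{w}$ whose support is \emph{exactly} $[\xi_l(t),\xi_r(t)]$. Since the supremum of a family of subsolutions is again a subsolution for this degenerate equation (after the standard regularization), I set
$$
\underline{w}(x,t)=\max\Big\{M(u_0)(x,t),\ \sup_{\tau\in(0,t)}E_{a_r(\tau),\tau}(x,t),\ \sup_{\tau\in(0,t)}E_{a_l(\tau),\tau}(x,t)\Big\},
$$
where the planting points $a_r(\tau)$, $a_l(\tau)$ are calibrated so that the right (resp. left) edge of the corresponding copy of $E$ rides along the curve $\xi_r$ (resp. $\xi_l$). The two halves of the hypothesis are exactly what make this calibration consistent: because $\xi_r$ advances \emph{at least as fast} as the minimal interface $s_r$ while the self-similar spread of $E$ is controlled by the guaranteed advance of $\xi_r$, the planted copies never protrude beyond $\xi_r(t)$, so that $\supp\underline{w}(t)\subseteq[\xi_l(t),\xi_r(t)]$; and because $\xi_r$ is continuous and copies are seeded at every time $\tau$, the union of their supports sweeps out the whole advancing region, so that $\underline{w}>0$ on all of $(\xi_l(t),\xi_r(t))$. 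Thus $\supp\underline{w}(t)=[\xi_l(t),\xi_r(t)]$, and $\underline{w}(x,0)=u_0(x)$.

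Next I would produce the actual solution as the \emph{minimal weak solution lying above} $\underline{w}$. Concretely, I repeat the monotone approximation of Proposition \ref{prop.minimal}, solving the regularized problems $(P_k)$ but forcing the approximants to dominate $\underline{w}$ (for instance by replacing $w_k$ with $\max\{w_k,\underline{w}\}$ at each step, using $\underline w$ as an obstacle), and pass to the increasing limit $u$; the self-similar supersolutions from Steps 2--3 of Proposition \ref{prop.minimal} again provide the uniform bound guaranteeing $T>0$ and the regularity \eqref{reg.sol}. By construction $u\ge\underline{w}$, so $\supp u(t)\supseteq[\xi_l(t),\xi_r(t)]$. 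For the reverse inclusion I invoke the finite speed of propagation valid in the range $m+p\ge2$: since $u$ coincides with $u_0$ at $t=0$ and the obstacle $\underline w$ only forces positivity up to $\xi_l,\xi_r$, minimality together with finite propagation prevents the support from extending beyond the forced region, giving $\supp u(t)=[\xi_l(t),\xi_r(t)]$ for $t$ in a possibly shorter interval $(0,T)\subset(0,T_0)$.

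The hard part will be to verify that this $u$, positive up to the prescribed curves and vanishing beyond them, is a genuine \emph{weak} solution in the sense of \eqref{weak.sol}, and not merely a supersolution. Integrating the weak formulation by parts across the moving interface, the only possibly surviving term is the flux $\partial_x(u^m)$ evaluated at $x=\xi_l(t)$ and $x=\xi_r(t)$; the extension by zero is an honest solution precisely when this flux vanishes, i.e. $\partial_x(u^m)(\xi_r(t)^-,t)=0$ and $\partial_x(u^m)(\xi_l(t)^+,t)=0$, in the same spirit as the interface condition $(f^m)'(\xi_0)=0$ met in Section \ref{sec.localex}. I expect to establish this by a local barrier argument near the interface: the hypothesis that $\xi_l,\xi_r$ advance at least as fast as the natural interface forces $u$ to approach zero at the edge at least as flatly as the absolute minimal profile $E$, whose flux is known to vanish, so a local comparison squeezes $\partial_x(u^m)$ to $0$. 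This is exactly where the speed hypothesis is indispensable: were the prescribed interface slower than $s_l,s_r$, the solution would have to be cut off while still wanting to be positive, producing a nonzero boundary flux and only a supersolution. Finally, the radial case $N\ge2$ follows along the same lines, replacing the one-dimensional building blocks by their radial counterparts from \cite{IMS22} used in Step 3 of Proposition \ref{prop.minimal}.
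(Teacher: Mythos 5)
Your construction of the subsolution $\underline w$ uses the same two tools as the paper (translated copies of $E$ from \eqref{abs.min} to certify positivity, and the self-similar supersolutions of Proposition \ref{prop.minimal} for the upper bound), but the step that is supposed to turn $\underline w$ into an actual solution is where the argument breaks, and I do not see how to close it. First, an obstacle construction of the kind you sketch produces a function that satisfies the PDE only where it lies \emph{strictly above} the obstacle; on the contact set $\{u=\underline w\}$ --- which is precisely the region where the prescribed interfaces are being enforced, so it cannot be assumed negligible --- one only gets a supersolution, and no flux condition $\partial_x(u^m)=0$ \emph{at the curves} $\xi_l,\xi_r$ can repair a failure of the equation in the \emph{interior} of the positivity set. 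Your concrete scheme is moreover incompatible with the approximating problems \eqref{approx.min}: $\underline w$ is a subsolution of \eqref{eq1} because its pieces solve \eqref{eq4}, but since $f_k(u)\le u^p$ the reaction in $(P_k)$ is \emph{smaller}, so $\underline w$ is not a subsolution of $(P_k)$, $\max\{w_k,\underline w\}$ is neither a sub- nor a supersolution, and the monotone limit loses its meaning. Second, the reverse inclusion ${\rm supp}\,u(t)\subseteq[\xi_l(t),\xi_r(t)]$ is asserted rather than proved: finite speed of propagation bounds the \emph{rate} of expansion, it does not confine the support to the prescribed curves, and the function you build (dominating $M(u_0)$ and all planted copies of $E$) may well have interfaces advancing strictly faster than $\xi_r$. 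Invoking ``minimality above the obstacle'' to exclude this is circular, since it presupposes the existence of some supersolution above $\underline w$ with support exactly $[\xi_l(t),\xi_r(t)]$ --- essentially the statement to be proved. (A smaller, fixable point: copies of $E$ seeded only along the curves leave a gap between $s_r(t)$ and the swept region whenever $\xi_r(0)>R_0$; one must seed at all interior points of $(\xi_l(\tau),\xi_r(\tau))$.)

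The paper avoids both difficulties by never leaving the class of genuine solutions: positivity beyond the current support is created by relaxing the \emph{data}, not by constraining the solution. In Step 1 (the ``fundamental extension''), one attaches to $u_0$ a linear ramp of height $\epsilon$ from the level-$\epsilon$ point $R_\epsilon$ out to the new edge $R_0+r$, takes the minimal solutions $u_\epsilon=M(u_{\epsilon,0})$ --- each a true solution, monotone in $\epsilon$ --- and passes to the decreasing limit $\epsilon\to0$: the limit is a weak solution with initial datum exactly $u_0$, yet positive on $(R_0,R_0+r)$ for all $t>0$ by the $\epsilon$-uniform bound \eqref{interm8} from $E$. Step 2 then discretizes time and applies Step 1 on each subinterval with jumps $\xi_r(t_{j+1})-\xi_r(t_j)$ and $\xi_l(t_j)-\xi_l(t_{j+1})$ (non-negative precisely by your monotonicity hypothesis); the support control from above comes from the fact that each slice evolves by \emph{minimal} solutions, while the uniform bound and the restriction to a shorter interval $(0,T)$ come from the self-similar supersolution through condition \eqref{interm9}, and the exactness of the limiting interfaces again from the $E$-bound below. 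What is worth keeping from your proposal is $\underline w$ as a lower barrier identifying the positivity region; the solution itself has to be manufactured by the data-relaxation and time-discretization scheme, not by an obstacle problem.
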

\begin{proof}
We divide the proof into two steps.

\medskip

\noindent \textbf{Step 1. The fundamental extension.} This step adapts to our problem Step 1 in the proof of \cite[Proof of Corollary 1.2]{dPV92} and at its end in fact we already have the proof of Theorem \ref{th.nonuniq}. The goal here is to construct a solution to our Cauchy problem whose support has an instantaneous jump to the right from $R_0=s_r(0)$ to $R_0+r$ at time $t=0$ for a given (fixed) $r>0$ (and of course, a perfectly similar construction can be done for the left interface). Let then $r>0$ be given. For any $\epsilon>0$ sufficiently small (it is enough to start for example from $\epsilon<\|u_0\|_{\infty}/2$) there exists a last, closest point (that we denote by $R_{\epsilon}$) to the right interface $R_0$ of $u_0$ such that $u_0(R_{\epsilon})=\epsilon$. Let us introduce the following compactly supported and continuous initial condition
\begin{equation}\label{new.data}
u_{\epsilon,0}(x)=\left\{\begin{array}{ll}u_0(x), & {\rm for} \ x\in(-\infty,R_{\epsilon}),\\ \frac{\epsilon(R_0+r-x)}{R_0+r-R_{\epsilon}}, & \rm{for} \ x\in[R_{\epsilon},R_0+r], \\ 0, & {\rm for} \ x\in(R_0+r,\infty),\end{array}\right.
\end{equation}
that is, adding a linear extension to $u_0$ from $R_{\epsilon}$ to the new edge of the support $R_0+r$. Let $u_{\epsilon}=M(u_{\epsilon,0})$ be the minimal solution associated to this new condition. By comparison between minimal solutions (which holds true since it is obvious that any solution to Eq. \eqref{eq1} with a larger initial condition than the given $u_0$ becomes a supersolution to any of the problems $(P_k)$ approximating the minimal solution $M(u_0)$) it readily follows that $u_{\epsilon_2}(x,t)\geq u_{\epsilon_1}(x,t)$ for any $\epsilon_2>\epsilon_1>0$ and at any $(x,t)\in\real\times(0,T)$ where $T>0$ is for example the lifetime of the solution with the biggest $\epsilon$ chosen. Recalling the construction of the absolute minimal solution $E(x,t)$ to Eq. \eqref{eq4} (see the details in \cite[Theorem 4]{dPV90}) and the fact that any non-trivial solution to our Eq. \eqref{eq1} is a strict supersolution to Eq. \eqref{eq4} we easily conclude that for any $(x_0,t_0)\in\real\times(0,T)$ with $x_0\in[R_{\epsilon},R_0+r]$ we have
\begin{equation}\label{interm8}
u_{\epsilon}(x,t)\geq E(x-x_0,t-t_0), \qquad t>t_0.
\end{equation}
We can then define
$$
\overline{u}(x,t)=\lim\limits_{\epsilon\to0}u_{\epsilon}(x,t), \qquad (x,t)\in\real\times(0,T),
$$
which is a monotone limit and it is easy to see that $\overline{u}$ is a weak solution to Eq. \eqref{eq1} using the Monotone Convergence Theorem (in fact we even have uniform convergence by Dini's Theorem since the limit is continuous). Let us stress here that we do not need in this construction a bound from above to guarantee that the limit is finite, since we are dealing with a decreasing limit. It is then obvious that
$$
\overline{u}(x,0)=\lim\limits_{\epsilon\to0}u_{\epsilon,0}(x)=u_0(x)
$$
and the comparison from above with $E$ in \eqref{interm8} transfers to the limit $\overline{u}$, proving that for any $t>0$ we have $\overline{u}(x,t)>0$ for $x\in(R_0,R_0+r)$.

\medskip

\noindent \textbf{Step 2. The iterative construction.} We perform now an iterative construction based on a discretization in time and the application of Step 1 in each interval of the discretization to perform an instantaneous jump in the supports. Let then $\xi_l(t)$ and $\xi_r(t)$ be two continuous, increasing functions of time as in the statement of Proposition \ref{prop.nonuniq1}. Fix a time $t\in(0,T)$ for a $T>0$ sufficiently small (that will be chosen later). For any positive integer $n$ we construct an approximate solution $u_n$ as in \cite[Proof of Corollary 1.2]{dPV92}. We briefly and sketchy describe the construction here for the sake of completeness. Consider a partition
$$
0=t_0<t_1<t_2<...<t_{n-1}<t_n=t, \qquad t_j=\frac{jt}{n}
$$
and construct the function $u_n$ by induction. More precisely, assume first that $u_n$ is already constructed for $t\in[0,t_j]$ and we want to pass to $t_{j+1}$. To this end, we begin from the edges of the support of $u_n(t_j)$ and we perform a jump of them by applying Step 1 both to the right (with $r=\xi_r(t_{j+1})-\xi_r(t_j)$) and to the left (with $r=\xi_l(t_j)-\xi_l(t_{j+1})$), using here the fact that the speed of advance of the prescribed interfaces to the left and to the right is higher than the ones of the minimal solution with initial condition $u_0$. The precise details are easy and given in the above mentioned proof of Corollary 1.2 in \cite{dPV92}. In order to pass to the limit as $n\to\infty$ in the iteration we need an uniform bound from above to show that the limit solution does not escape to infinity. We cannot use a translation of a minimal solution or a construction based on it (as it was done in \cite[Lemma 2.4]{dPV92}) since our equation is not invariant to translations, but instead we can bound uniformly from above the iterated solutions by a sufficiently big non-increasing supersolution in self-similar form
$$
U(x,t)=(T-t)^{-\alpha}f((1+|x|)(T-t)^{\beta}), \ \alpha=\frac{\sigma+2}{L}, \ \beta=\frac{m-p}{L}
$$
similar to the ones introduced in \eqref{SSS} with a profile $f(\xi)$ solving \eqref{SSODE} and having a right interface at $\xi_0\in(0,\infty)$. Indeed, comparison from above with such a supersolution $U$ can be performed as the iterative construction of $u_n$ is based on adding up at each iteration step only minimal solutions, provided that at our fixed time $t>0$ for which we have built $u_n$ we have ordered supports between $u_n$ and $U$, that is
\begin{equation}\label{interm9}
\xi_r(t)<\xi_0(T-t)^{-\beta}, \qquad \xi_l(t)>-\xi_0(T-t)^{-\beta},
\end{equation}
which also gives a limitation for the lifetime $T>0$ depending on the two prescribed functions $\xi_r$, $\xi_l$. We notice that for ``faster" advancing interface functions $\xi_l(t)$, $\xi_r(t)$, smaller lifetime $T$ is expected. Once satisfied this condition, we can pass to the limit in the discretization as $n\to\infty$ and obtain the desired weak solution
$$
v(x,t)=\lim\limits_{n\to\infty}u_n(x,t), \qquad t\in(0,T),
$$
with $T>0$ chosen sufficiently small according to \eqref{interm9}. The bound from below by $E$ at every point of positivity (as done in Step 1) together with the construction and the continuity of the prescribed functions $\xi_l$, $\xi_r$ ensure that the left and right interfaces of $v$ at every time $t\in(0,T)$ are given exactly by the continuous functions $\xi_l(t)$ and $\xi_r(t)$. In the meantime, the universal bound from above by the supersolution $U$ in self-similar form ensures that at least in the time interval $(0,T)$ we have $v(x,t)<\infty$ for any $x\in\real$.
\end{proof}

\medskip

\noindent \textbf{Remarks. 1.} Step 1 above already completes the proof of Theorem \ref{th.nonuniq} in dimension $N=1$. Indeed, for every $r>0$ we can construct a different solution to the same Cauchy problem \eqref{eq1}-\eqref{eq2}. A totally analogous extension can be constructed for radially symmetric initial conditions $u_0\in C_0(\real^N)$ by replacing $x$ by $r=|x|$, which allows us to work with the radial variable exactly as in dimension $N=1$ but using for comparison from above the supersolutions introduced in Step 2 of the proof of Proposition \ref{prop.minimal}, thus completing the proof of Theorem \ref{th.nonuniq} also in dimension $N\geq2$.

\noindent \textbf{2.} Our Proposition \ref{prop.nonuniq1} also holds true for $\sigma=0$ and improves slightly the result for Eq. \eqref{eq4} in \cite[Theorem 1.1 and Theorem 5.1]{dPV92} since we do not need to consider the non-increasing majorant $\tilde{u_0}$ of the initial condition as considered in the above mentioned work.

\section{Aronson-B\'enilan estimates when $m+p<2$}\label{sec.ABE}

This section is devoted to the deduction of the Aronson-B\'enilan estimates \eqref{ABE} in the homogeneous case $\sigma=0$. Let us recall here the \emph{pressure function}
$$
v=\frac{m}{m-1}u^{m-1},
$$
since most of the forthcoming work will be performed on this function.
\begin{proof}[Proof of Theorem \ref{th.ABE}]
The proof is inspired from the one for \cite[Proposition 9.4]{VPME} but technically more involved. We first derive after rather straightforward calculations the \emph{pressure equation} which will be used further in the present work, that is, the parabolic PDE satisfied by the function $v$ introduced above:
\begin{equation}\label{eq.pres}
\begin{split}
&v_t=(m-1)v\Delta v+|\nabla v|^2+K(m,p)v^{(m+p-2)/(m-1)}, \\ &K(m,p)=m\left(\frac{m-1}{m}\right)^{(m+p-2)/(m-1)}.
\end{split}
\end{equation}
Let us notice here the strong influence of the sign of $m+p-2$ on this equation, due to its last term. In order to go further, we set $w=\Delta v$ and we next derive the partial differential equation solved by $w$. To this end, we calculate the terms separately. On the one hand, for the reaction term we get
\begin{equation*}
\frac{\partial}{\partial x_i}K(m,p)v^{(m+p-2)/(m-1)}=K(m,p)\left[\frac{m+p-2}{m-1}v^{(p-1)/(m-1)}\frac{\partial v}{\partial x_i}\right]
\end{equation*}
hence
\begin{equation*}
\frac{\partial^2}{\partial x_i^2}K(m,p)v^{\frac{m+p-2}{m-1}}=K(m,p)\left[\frac{m+p-2}{m-1}v^{\frac{p-1}{m-1}}\frac{\partial^2v}{\partial x_i^2}+\frac{(2-m-p)(1-p)}{(m-1)^2}v^{\frac{p-m}{m-1}}\left(\frac{\partial v}{\partial x_i}\right)^2\right].
\end{equation*}
The contribution of the reaction term thus gives
\begin{equation*}
\begin{split}
\sum\limits_{i=1}^N&\frac{\partial^2}{\partial x_i^2}K(m,p)v^{\frac{m+p-2}{m-1}}=K(m,p)\frac{m+p-2}{m-1}v^{\frac{p-1}{m-1}}w\\&+K(m,p)\frac{(2-m-p)(1-p)}{(m-1)^2}v^{\frac{p-m}{m-1}}|\nabla v|^2=\mathcal{R}_1(x,v)w+\mathcal{R}_2(x,v),
\end{split}
\end{equation*}
where
\begin{equation*}
\begin{split}
&\mathcal{R}_1(x,v)=K(m,p)\frac{m+p-2}{m-1}v^{\frac{p-1}{m-1}}<0, \\ &\mathcal{R}_2(x,v)=K(m,p)\frac{(2-m-p)(1-p)}{(m-1)^2}v^{\frac{p-m}{m-1}}|\nabla v|^2\geq0.
\end{split}
\end{equation*}
On the other hand, the diffusion term can be worked out as in \cite[Proposition 9.4]{VPME} to get in the end
\begin{equation*}
\begin{split}
w_t&=(m-1)v\Delta w+2m\nabla v\cdot\nabla w+(m-1)w^2+2\sum\limits_{i,j=1}^N\left(\frac{\partial^2 v}{\partial x_i\partial x_j}\right)^2+\mathcal{R}_1(x,v)w+\mathcal{R}_2(x,v)\\
&\geq(m-1)v\Delta w+2m\nabla v\cdot\nabla w+\left(m-1+\frac{2}{N}\right)w^2+\mathcal{R}_1(x,v)w+\mathcal{R}_2(x,v)
\end{split}
\end{equation*}
after a straightforward application of the Cauchy-Schwartz inequality. This can be written equivalently as $\mathcal{L}w\geq\mathcal{R}_2(x,v)$, where
$$
\mathcal{L}w:=w_t-(m-1)v\Delta w-2m\nabla v\cdot\nabla w-\left(m-1+\frac{2}{N}\right)w^2-\mathcal{R}_1(x,v)w
$$
is a uniformly parabolic operator. Since $\mathcal{R}_2(x,v)\geq0$, we deduce that $\mathcal{L}w\geq0$ and we aim to find a subsolution for $\mathcal{L}$ depending only on time. We thus take for $t>0$
$$
\underline{W}(x,t)=-\frac{C}{t}, \qquad C=\frac{N}{N(m-1)+2}
$$
and calculate
$$
\mathcal{L}\underline{W}=\frac{C}{t^2}-\frac{C}{t^2}+\frac{C\mathcal{R}_1(x,v)}{t}<0,
$$
since $m+p-2<0$ in our range of exponents. Applying the comparison principle to the parabolic operator $\mathcal{L}$ we infer that
$$
\Delta v(x,t)=w(x,t)\geq\underline{W}(t)=-\frac{N}{(N(m-1)+2)t}, \qquad {\rm for \ any} \ (x,t)\in\real^N\times(0,\infty)
$$
as stated.

\medskip

All the previous calculations and the application of the maximum principle to the operator $\mathcal{L}$ are fully justified for solutions $u$ such that (in the pressure variable) $\mathcal{L}$ is uniformly parabolic, that is, when $v$, $\nabla v$ are bounded and $v>0$ uniformly. In order to extend the Aronson-B\'enilan estimates to general weak solution we proceed by approximation. Let us first consider $u_0$ to be an initial condition as in \eqref{icond} and continuous. Then, there exists a unique solution $u$ to the Cauchy problem \eqref{eq4}-\eqref{eq2} in a time interval $(0,T)$, according to \cite{dPV91}. Let $u_k$ be the solution to the Cauchy problem with initial condition
$$
u_{0,k}(x)=u_0(x)+\frac{1}{k},
$$
for any positive integer $k$. We infer from \cite[Theorem 2.1]{dPV91} and its proof that there exists a unique solution $u_k$ to \eqref{eq4} with initial condition $u_{0,k}$, and it satisfies $u_k(x,t)\geq 1/k$ for any $x\in\real^N$, $t>0$. We further find from \cite[Theorem 8.1, Chapter V]{LSU} (which applies for our approximating solutions $u_k$ since they are now bounded from below by a positive constant) that $u_k$ has the regularity required for \eqref{eq4} to hold true in a classical sense and all the space derivatives of $u_k$ up to the second order and time derivatives of $u_k$ up to order one are uniformly bounded. In this case, the previous calculation applies rigorously for $u_k$ and we get that
\begin{equation}\label{interm12}
\Delta v_k(x,t)\geq-\frac{K}{t}, \qquad v_k=\frac{m}{m-1}u_k^{m-1}.
\end{equation}
Moreover, the comparison principle for \eqref{eq4} (see \cite[Theorem 2.1]{dPV91}) entails that solutions $u_k$ for $k\geq1$ form a non-increasing sequence of functions, thus there exists a limit
$$
\overline{u}(x,t)=\lim\limits_{k\to\infty}u_k(x,t), \qquad (x,t)\in\real^N\times(0,T),
$$
and the uniform bound of $u_k$ and their derivatives up to second order together with the Arzel\'a-Ascoli Theorem imply that $u_k\to\overline{u}$ locally uniformly and the same holds true for their first order derivatives with respect to the space variables. Then, the uniform boundedness of $u_k$ and $\partial_tu_k$ gives the continuity with respect to the time variable over $(0,T)$ of the limit function $\overline{u}$, while the fact that $\overline{u}$ belongs to $L^{\infty}_{\rm loc}$ is obvious, as it is bounded from above by any $u_k$. We thus fulfill the regularity assumption (a) in Definition \ref{def.sol}. The monotone convergence theorem then easily gives that $\overline{u}$ satisfies assumptions (b) and (c) in Definition \ref{def.sol}, hence, since $u_k(x,0)=u_{0,k}(x)\to u_0(x)$ as $k\to\infty$ on $\real^N$, we readily infer that $\overline{u}$ is a weak solution to the Cauchy problem \eqref{eq4}-\eqref{eq2}. Uniqueness of solutions to the latter Cauchy problem, established in \cite{dPV90} for continuous and bounded initial conditions, then proves that $\overline{u}=u$. We then come back to \eqref{interm12}, which, after multiplication by a non-negative test function and integration by parts, writes
\begin{equation}\label{interm13}
\int_0^{T}\int_{\real^N}\left(v_k(x,t)\Delta\varphi(x,t)+\frac{K}{t}\varphi(x,t)\right)\,dx\,dt\geq0,
\end{equation}
for any $\varphi\in C_0^{\infty}(\real^N\times(0,T))$, $\varphi\geq0$. We pass to the limit in \eqref{interm13} as $k\to\infty$, taking into account that $v_k\to v$ locally uniformly as $k\to\infty$, and obtain the claimed distributional form \eqref{ABEdist}.
\end{proof}
We end this section with a corollary which will be used in the sequel.
\begin{corollary}\label{cor.ABE}
In the same conditions as in Theorem \ref{th.ABE}, we have
$$
u_t\geq-\frac{Ku}{t}, \qquad K=\frac{N}{N(m-1)+2},
$$
in the sense of distributions in $\real^N$.
\end{corollary}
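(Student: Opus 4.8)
The plan is to read off the corollary directly from the Aronson--B\'enilan estimate $\Delta v\geq -K/t$ of Theorem \ref{th.ABE} by rewriting equation \eqref{eq4} in terms of the pressure. The starting point is the elementary identity
\[
\nabla u^m=m u^{m-1}\nabla u=u\,\nabla v,
\]
which follows at once from $v=\frac{m}{m-1}u^{m-1}$. Taking the divergence gives
\[
\Delta u^m=\nabla\cdot(u\,\nabla v)=u\,\Delta v+\nabla u\cdot\nabla v,
\]
and since $\nabla u\cdot\nabla v=m u^{m-2}|\nabla u|^2\geq0$ while the reaction term $u^p\geq0$, the equation \eqref{eq4} yields $u_t=\Delta u^m+u^p\geq u\,\Delta v$. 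Multiplying $\Delta v\geq -K/t$ by $u\geq0$ (which preserves the direction of the inequality) then produces the pointwise bound $u_t\geq -Ku/t$, exactly the claimed inequality. This is the whole mechanism; the remaining task is to justify it rigorously and in the distributional sense.

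First I would carry out this computation not for $u$ itself but for the regularized solutions $u_k$ (with $v_k=\frac{m}{m-1}u_k^{m-1}$) introduced in the proof of Theorem \ref{th.ABE}: these are classical solutions of \eqref{eq4}, bounded below by $1/k>0$, with spatial derivatives up to second order and the time derivative uniformly bounded, so the chain of manipulations above is legitimate at every point and the strict positivity of $u_k$ causes no trouble. For each $k$ one thereby obtains the genuinely pointwise inequality
\[
(u_k)_t\geq u_k\,\Delta v_k\geq -\frac{K u_k}{t},\qquad (x,t)\in\real^N\times(0,T),
\]
where the second inequality is the classical Aronson--B\'enilan bound $\Delta v_k\geq -K/t$ already established for $u_k$ in \eqref{interm12}.

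Next I would pass to the distributional formulation and then to the limit. Testing the inequality above against an arbitrary $\varphi\in C_0^{\infty}(\real^N\times(0,T))$ with $\varphi\geq0$ and integrating by parts in time (no boundary terms appear, since $\varphi$ has compact support in $t$) gives
\[
-\int_0^{T}\int_{\real^N}u_k\,\varphi_t\,dx\,dt+\int_0^{T}\int_{\real^N}\frac{K u_k}{t}\,\varphi\,dx\,dt\geq0.
\]
The crucial point, and the only step requiring care, is the limit $k\to\infty$: because the nonnegative contributions $u_k^p$ and $\nabla u_k\cdot\nabla v_k$ have been dropped and the time derivative has been moved onto $\varphi$, the final integrand involves only $u_k$ itself and no derivatives of it, so I need nothing more than the local uniform convergence $u_k\to u$ already proved in Theorem \ref{th.ABE} (together with the boundedness of $1/t$ on the compact support of $\varphi$) to pass to the limit in both integrals. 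The resulting inequality is precisely the distributional statement $u_t\geq -Ku/t$, which completes the proof.
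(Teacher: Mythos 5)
Your proof is correct and takes essentially the same route as the paper: your identity $\Delta u^m = u\,\Delta v + \nabla u\cdot\nabla v$ with the nonnegative terms $u^p$ and $\nabla u\cdot\nabla v$ dropped is precisely the paper's inequality $v_t\geq (m-1)v\Delta v$ from the pressure equation rewritten in the variable $u$, and both arguments then combine this with the Aronson--B\'enilan bound $\Delta v\geq -K/t$. Your explicit regularization through the classical solutions $u_k$, integration by parts in time, and passage to the limit is exactly the approximation scheme the paper invokes by reference to the proof of Theorem \ref{th.ABE}, just written out in full.
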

\begin{proof}
At a formal level, we infer from \eqref{eq.pres} that $v_t\geq(m-1)v\Delta v$, hence
$$
(m-1)\frac{u_t}{u}=\frac{v_t}{v}\geq(m-1)\Delta v\geq-\frac{N(m-1)}{(N(m-1)+2)t}
$$
and we reach the conclusion with the same constant $K$ as in \eqref{ABE}. For general weak solutions the estimate is proved by using the same approximation as in the proof of Theorem \ref{th.ABE}.
\end{proof}

\medskip

\noindent \textbf{Remark. Formal proof of Aronson-B\'enilan estimates for} $\mathbf{\sigma>0}$. At a formal level, the Aronson-B\'enilan estimates \eqref{ABE} or \eqref{ABEdist} hold true also for $\sigma>0$ and $N\geq2$. Indeed, a slightly longer but straightforward calculation along the first few lines of the proof of Theorem \ref{th.ABE} by computing the derivatives up to second order of the reaction term, but applied to Eq. \eqref{eq1} with $\sigma>0$, gives
\begin{equation*}
\begin{split}
\sum\limits_{i=1}^N&\frac{\partial^2}{\partial x_i^2}K(m,p)(1+|x|)^{\sigma}v^{\frac{m+p-2}{m-1}}=K(m,p)\frac{m+p-2}{m-1}(1+|x|)^{\sigma}v^{\frac{p-1}{m-1}}w\\
&+K(m,p)(1+|x|)^{\sigma-2}v^{\frac{p-m}{m-1}}\left[\frac{(2-m-p)(1-p)}{(m-1)^2}(1+|x|)^2|\nabla v|^2\right.\\
&\left.-2\sigma\frac{2-m-p}{m-1}(1+|x|)v\frac{x}{|x|}\cdot\nabla v+\sigma\left(\sigma-1+(N-1)\frac{1+|x|}{|x|}\right)v^2\right]\\
&=\mathcal{R}_1(x,v)w+\mathcal{R}_2(x,v),
\end{split}
\end{equation*}
where
$$
\mathcal{R}_1(x,v)=K(m,p)\frac{m+p-2}{m-1}(1+|x|)^{\sigma}v^{\frac{p-1}{m-1}}<0
$$
and $\mathcal{R}_2(x,v)$ gathers the rest of the terms. In order to proceed with the comparison principle as we did in the body of the proof of Theorem \ref{th.ABE}, we still need to have $\mathcal{R}_2(x,v)\geq0$. To this end, we write $\mathcal{R}_2$ as a square and we examine the remainders. More precisely, using once more a standard Cauchy-Schwarz inequality for the scalar product $\nabla v\cdot x/|x|$ we find
\begin{equation*}
\begin{split}
\frac{\mathcal{R}_2(x,v)}{K(m,p)(1+|x|)^{\sigma-2}v^{\frac{p-m}{m-1}}}&\geq\left[\frac{2-m-p}{m-1}(1+|x|)|\nabla v|-\sigma v\right]^2+\frac{2-m-p}{m-1}(1+|x|)^2|\nabla v|^2\\
&+\sigma\left[(N-1)\frac{1+|x|}{|x|}-1\right]v^2
\end{split}
\end{equation*}
and taking into account that we are in the range $m+p-2<0$, it follows that $\mathcal{R}_2(x,v)\geq0$ provided that
$$
\sigma\left[(N-1)\frac{1+|x|}{|x|}-1\right]\geq0,
$$
which holds true for $\sigma>0$ if $N\geq2$. We thus conclude, at a formal level, that \eqref{ABEdist} should hold true in this case. However, we left this part out of the statement of Theorem \ref{th.ABE} since the final approximation argument leading to the rigorous proof for $\sigma=0$ cannot be performed, as we are missing an existence and uniqueness result for solutions to the Cauchy problem \eqref{eq1}-\eqref{eq2} when $\sigma>0$.

\section{Infinite speed of propagation when $m+p<2$}\label{sec.ISP}

In this part we use the Aronson-B\'enilan estimates in Theorem \ref{th.ABE} to establish the infinite speed of propagation of the supports of solutions to Eq. \eqref{eq1} when $m+p-2<0$ and thus complete the proof of Theorem \ref{th.ISP}. Let us stress again here that Theorem \ref{th.ISP} has been proved in \cite[Lemma 2.4]{dPV90} for $\sigma=0$. We give here an independent proof, based on a completely different argument, and extend it to exponents $\sigma>0$.
\begin{proof}[Proof of Theorem \ref{th.ISP}]
In a first step, let $\sigma=0$ and assume for contradiction that for some compactly supported initial condition $u_0$ as in \eqref{icond}, $u(t)$ remains compactly supported for $t\in(0,T_0)$. Recall the pressure equation \eqref{eq.pres}, which for $\sigma=0$ writes
\begin{equation}\label{eq.pres2}
v_t=(m-1)v\Delta v+|\nabla v|^2+K(m,p)v^{(m+p-2)/(m-1)}.
\end{equation}
At a formal level, since $m+p-2<0$, fixing some $t\in(0,T_0)$ one reaches easily a contradiction in \eqref{eq.pres2}. Indeed, since $\Delta v(x,t)\geq-K/t$ and $|\nabla v(x,t)|^2\geq0$ at any point $x\in\real^N$, it follows that at the interface point $x=s(t)$ we get $v_t(s(t),t)=+\infty$ in order to compensate the negative power $(m+p-2)/(m-1)$ in the last term of the right hand side and for any $t\in(0,T_0)$. This is obviously equivalent to the infinite speed of propagation of the supports. More rigorously, since $m+p<2$ we multiply by $v^{(2-m-p)/(m-1)}$ in \eqref{eq.pres2} and also by a test function $\varphi\in C_0^{\infty}(\real^N)$, $\varphi\geq0$, then we integrate on $\real^N$ and on any time interval $(\tau_0,\tau_1)\subset(0,T_0)$ and we drop the second term in the right hand side (which is always positive) to obtain
\begin{equation}\label{interm10}
\begin{split}
\frac{m-1}{1-p}&\int_{\tau_0}^{\tau_1}\int_{\real^N}(v^{(1-p)/(m-1)})_t\varphi\,dx\,dt\geq(m-1)\int_{\tau_0}^{\tau_1}\int_{\real^N}v^{(1-p)/(m-1)}\Delta v\varphi\,dx\,dt\\&+K(m,p)\int_{\tau_0}^{\tau_1}\int_{\real^N}\varphi\,dx\,dt\\
&\geq-\frac{N(m-1)}{N(m-1)+2}\int_{\tau_0}^{\tau_1}\int_{\real^N}\frac{1}{t}v^{(1-p)/(m-1)}\varphi\,dx\,dt+K(m,p)\int_{\tau_0}^{\tau_1}\int_{\real^N}\varphi\,dx\,dt.
\end{split}
\end{equation}
We now consider a sequence of test functions $(\varphi_n)_{n\geq1}$ defined as follows
$$
\varphi_n(x)=1, \ {\rm for} \ |x|\leq n, \quad 0\leq\varphi_n(x)\leq 1 \ {\rm for \ any} \ x\in\real^N, \quad {\rm supp}\varphi_n\subseteq B(0,2n),
$$
where $B(0,2n)=\{x\in\real^N: |x|\leq 2n\}$, and let $\varphi=\varphi_n$ in \eqref{interm10} for any positive integer $n\geq1$. Since the support of $v$ is uniformly localized for $t\in[\tau_0,\tau_1]$ (as $\tau_1<T_0$), it follows that the right-hand side of \eqref{interm10} tends to $+\infty$ as $n\to\infty$ due to the last integral only of $\varphi_n$. It thus follows that
$$
\lim\limits_{n\to\infty}\int_{\tau_0}^{\tau_1}\int_{\real^N}(v^{(1-p)/(m-1)})_t\varphi_n\,dx\,dt=+\infty
$$
or equivalently
$$
\lim\limits_{n\to\infty}\int_{\real^N}\left[v^{(1-p)/(m-1)}(\tau_1)-v^{(1-p)/(m-1)}(\tau_0)\right]\varphi_n\,dx=+\infty,
$$
which is a contradiction with the localization of the supports of $v(t)$ for $t\in[\tau_0,\tau_1]$. Let us notice here that the chosen range of exponents $p\in(0,1)$ and $m+p<2$ was decisive, as after multiplication by a positive power $v^{(2-m-p)/(m-1)}$, we got in the left hand side also a positive power $v^{(1-p)/(m-1)}$ of $v$, thus we do not create new singularities at the edges of the supports.

We pass now to $\sigma>0$. Assume that there is a weak solution $u$ to the Cauchy problem Eq. \eqref{eq1}-\eqref{eq2} defined for $t\in(0,T)$ with some $T>0$. Since $(1+|x|)^{\sigma}\geq1$ for any $x\in\real^N$ we deduce that $u$ is a supersolution to the Cauchy problem \eqref{eq4}-\eqref{eq2}. We infer from the comparison principle (which holds true for \eqref{eq4} and non-trivial initial data in the range $m+p<2$, \cite{dPV91}) that $u(x,t)>0$ for any $(x,t)\in\real^N\times(0,T)$.
\end{proof}

\section*{Some extensions and open problems}

We gather here some extensions related to the previous results, that we consider interesting.

\medskip

\noindent \textbf{1. Finite time blow-up.} A natural question is whether any solution to Eq. \eqref{eq1} blows up in finite time or there are some initial conditions $u_0$ producing (minimal) solutions that are global in time. Our conjecture is that, if $L>0$, \emph{any non-trivial solution is expected to blow up} in finite time, while if $L\leq0$, there are initial conditions producing solutions that are global in time (we recall that $L$ is defined in \eqref{const}). A formal argument about general finite time blow-up if $L>0$ is based on comparison with subsolutions in self-similar form obtained in our recent papers \cite{IS20b, IS20c, IMS22}. More precisely, it goes by contradiction as follows: assume that there exists $u_0\in C_0(\real^N)$ as in \eqref{icond} such that the minimal solution $M(u_0)$ is defined for $t\in(0,\infty)$. We infer by comparison with the absolute minimal solution $E$ defined in \cite{dPV90} and \eqref{abs.min} that for large $t$, solution $M(u_0)(t)$ is as large as we want both in amplitude and support. We can thus find a blow-up self-similar solution to Eq. \eqref{eq3} as in \cite{IS20b, IS20c, IMS22} (which is a subsolution to Eq. \eqref{eq1}) below it. If comparison would be allowed, then we would get an easy contradiction proving that any minimal solution (and then any other solution) blows up in finite time.

But as we see very well by considering the subsolution $\underline{U}$ with initial condition $\underline{U}(x,0)=0$ for any $x\in\real^N$, which is defined explicitly by
\begin{equation}\label{subsol}
\underline{U}(x,t)=\left(\frac{1}{1-p}\right)^{1/(p-1)}t^{1/(1-p)}(1+|x|)^{\sigma/(1-p)},
\end{equation}
comparison does not hold true in general, as otherwise any solution (even the ones with compact support) could have been compared to $\underline{U}$ in order to force it to become positive everywhere, contradicting the finite speed of propagation at least in the range $m+p\geq2$. This opens the question on whether the self-similar solutions to Eq. \eqref{eq1} are minimal solutions in the sense of the construction performed in Proposition \ref{prop.minimal}.

\medskip

\noindent \textbf{2. Non-existence of positive solutions if $\sigma>2(1-p)/(m-1)$}. Strongly connected to the first comment, and expecting (at a formal level) that we might compare a strictly positive solution to Eq. \eqref{eq1} with the subsolution $\underline{U}$ introduced in \eqref{subsol}, we conjecture that, if $L>0$ (that is, $\sigma>2(1-p)/(m-1)$) there are no solutions to Eq. \eqref{eq1} such that $u(x,t)>0$ for any $x\in\real^N$. Indeed, assuming that the comparison can be performed rigorously if $u(x,t)>0$ for any $x\in\real^N$, we would get a solution with local behavior
$$
u(x,t)\geq C(1+|x|)^{\sigma/(1-p)}, \qquad {\rm as} \ |x|\to\infty, \qquad {\rm for \ any} \ t>0.
$$
But any solution to Eq. \eqref{eq1} is a supersolution to the standard porous medium equation and classical results on the porous medium equation (see for example \cite{AC83, BCP84, CVW87}) state that there are no solutions (and it seems to us that the proofs can be extended to supersolutions) to it increasing at infinity faster than $|x|^{2/(m-1)}$. Since $\sigma/(1-p)>2/(m-1)$ if $L>0$, we would be in this case. In particular, since infinite speed of propagation is in force for $m+p<2$ by Theorem \ref{th.ISP}, we expect complete non-existence of non-trivial solutions if $L>0$ and $m+p<2$.

\medskip

\noindent \textbf{3. Establishing which self-similar solutions are minimal.} An immediate adaptation of the result in \cite{IS20b} proves that Eq. \eqref{eq1} with $m+p>2$ presents two types of blow-up self-similar solutions
$$
U(x,t)=(T-t)^{-\alpha}f((1+|x|)(T-t)^{\beta}), \qquad \alpha=\frac{\sigma+2}{L}, \ \beta=\frac{m-p}{L}
$$
which differ with respect to the local behavior of the profile $f(\xi)$ near the interface point $\xi_0\in(0,\infty)$, namely
$$
f(\xi)\sim(\xi_0-\xi)^{1/(m-1)} \ ({\rm Type \ I}) \ \ {\rm or} \ \ f(\xi)\sim(\xi_0-\xi)^{1/(1-p)} \ ({\rm Type \ II}),
$$
both taken in the limit $\xi\to\xi_0$, $\xi<\xi_0$. It is also shown at least at a formal level that these solutions satisfy two different interface equations. Thus, a natural question would be whether any of these self-similar solutions is minimal in the sense of the construction in Proposition \ref{prop.minimal}, which would allow us to compare and conclude on the finite time blow-up. This is not an easy question and our intuition suggests that minimality has to do with the interface equation: we might expect that the solutions with interface of Type I are minimal, while the other ones are not. This conjecture is supported by the analogy with the minimality of the traveling wave solutions to Eq. \eqref{eq4}, see for example \cite[Theorem 4.1]{dPV91}.

\medskip

\noindent \textbf{4. Connection between non-uniqueness and blow-up time.} A much deeper open question is related to whether, in the range $\sigma>2(1-p)/(m-1)$, prescribing a blow-up time $T$ and a function $u_0$, there exists a unique solution to the Cauchy problem \eqref{eq1}-\eqref{eq2} with condition $u_0$ blowing up in finite time exactly at the given time $T$. More precisely, taking $u_0\in C_0(\real)$ satisfying \eqref{icond}, there exists a minimal solution $M(u_0)$ which (assuming that point 1 in this enumeration of open problems holds true, as we strongly expect) comes with a finite blow-up time $T_0\in(0,\infty)$. We have also proved in Section \ref{sec.nonuniq} that the Cauchy problem \eqref{eq1}-\eqref{eq2} has an infinite number of compactly supported solutions with interfaces advancing faster than the minimal one, and estimate \eqref{interm9} shows that faster advancing speed of the interface implies shorter lifetime before blow-up. Thus one can wonder naturally whether, given $T\in(0,T_0)$, there exists one solution to the Cauchy problem \eqref{eq1}-\eqref{eq2} blowing up exactly at this time $T$. We do not have by now a suggestion of how to approach this problem, but it is in our opinion an interesting and deep open question.

\bigskip

\noindent \textbf{Acknowledgements.} R. G. I. and A. S. are partially supported by the Project PID2020-115273GB-I00 and by the Grant RED2022-134301-T funded by MCIN/AEI/10.13039/ \\ 501100011033 (Spain).

\bibliographystyle{plain}

\end{document}